\font\tencmmib=cmmib10 \skewchar\tencmmib '60
\def\lessim{\ \lower4pt\hbox{$
\buildrel{\displaystyle <}\over\sim$}\ }
\def\gessim{\ \lower4pt\hbox{$\buildrel{\displaystyle >}
\over\sim$}\ }
\newcommand{\e}{\mathbb{E}}
\newcommand{\hp}{\hat{\mathcal{P}}}
\newcommand{\hg}{\hat{\Gamma}}
\newtheorem{lemma}{\bf Lemma}
\newtheorem{theorem}{\bf Theorem}
\newtheorem{corollary}{\bf Corollary}
\newtheorem{remark}{\bf Remark}
\newtheorem{proposition}{\bf Proposition}
\newenvironment{Proof of lemma}{\noindent{\bf Proof of Lemma}}{\hfill$\Box$\newline}
\newenvironment{Proof of theorem}{\noindent{\bf Proof of Theorem}}{\hfill{\footnotesize${\square}$}\newline}
\newenvironment{Proof of theorems}{\noindent{\bf Proof of Theorems}}{\hfill$\Box$\newline}
\newenvironment{Proof of proposition}{\noindent{\bf Proof of Proposition}}{\hfill$\Box$\newline}
\newenvironment{Proof of propositions}{\noindent{\bf Proof of Propositions}}{\hfill$\Box$\newline}
\newenvironment{Proof of exercise}{\noindent{\it Proof of Exercise:}}{\hfill$\Box$}
\begin{document}

\title{The Legendre structure of the Parisi formula}
\author{Antonio Auffinger  \thanks{auffing@math.northwestern.edu } \\ \small{Northwestern University}\and Wei-Kuo Chen \thanks{wkchen@umn.edu} \\ \small{University of Minnesota} }
\maketitle

\begin{abstract}
	We show that the Parisi formula of the mixed $p$-spin model is a concave function of the squared inverse temperature. This allows us to derive a new expression for the Parisi formula that involves the inverse temperature and the Parisi measure as Legendre conjugate variables.
\end{abstract}

{\it Keywords}: Legendre transform, Parisi formula

\section{Introduction and Main results}

The Sherrington-Kirkpatrick (SK) model is one of the most fundamental models of spin glasses. Based on the so-called replica method, an attempt to fully describe the behavior of the system was proposed in the astounding work of Parisi \cite{Pa79,Pa80}. In Parisi's theory, the thermodynamic limit of the free energy can be computed through a minimization problem, known as the Parisi formula. Over the past decades, in a series of groundbreaking works by Guerra \cite{G03}, Talagrand \cite{Tal11I,Tal11II}, and Panchenko \cite{P13}, the Parisi formula was proved in a general class of mean-field spin glasses, the mixed $p$-spin model. Its Hamiltonian is defined as
$$
-H_N(\sigma)=\sum_{p=2}^\infty\frac{c_p}{N^{(p-1)/2}}\sum_{1\leq i_1,\ldots,i_p\leq N}g_{i_1,\ldots,i_p}\sigma_{i_1}\cdots\sigma_{i_p}
$$
for $\sigma = (\sigma_{1}, \ldots, \sigma_{N})\in \Sigma_N:=\{-1,+1\}^N$, where $g_{i_1,\ldots,i_p}$'s are i.i.d. standard Gaussian for all $1\leq i_1,\ldots,i_p\leq N$ and $p\geq 2$. Here the real sequence $(c_p)$ is assumed to decay fast enough, e.g., $\sum_{p=2}^\infty2^pc_p^2<\infty$ so that the covariance of $H_N$ can be computed as
$$
\e H_N(\sigma^1)H_N(\sigma^2)=N\xi(R_{1,2}),
$$
where $
\xi(t):=\sum_{p=2}^\infty c_p^2t^{p}
$
and 
$
R_{1,2}:=N^{-1}\sum_{i=1}^N\sigma_i^1\sigma_i^2
$
is the overlap between two spin configurations $\sigma^1$ and $\sigma^2.$ In particular, the SK model is $\xi(t)=t^2/2.$

The Parisi formula is described as follows. Let $\mathcal{M}$ be the space of all distribution functions on $[0,1]$ endowed with the Lebesgue $L^1$-norm on $[0,1]$. For each $\alpha\in\mathcal{M}$ and $\beta\geq 0$, let $\Phi_{\alpha,\beta}$ be the solution to the Parisi PDE, 
\begin{align}
\label{pde}
\partial_t\Phi_{\alpha,\beta}(t,x)=-\frac{\beta^2\xi''(t)}{2}\bigl(\partial_{xx}\Phi_{\alpha,\beta}(t,x)+\alpha(t)(\Phi_{\alpha,\beta}(t,x))^2\bigr)
\end{align}
for $(t,x)\in[0,1]\times\mathbb{R}$ with terminal condition $\Phi_{\alpha,\beta}(1,x)=\log \cosh x.$ We remark that this equation is solvable in the classical sense by performing the Hopf-Cole transformation when $\alpha$ is a step function, while in the general case, the solution $\Phi_{\alpha,\beta}$ should be understood in the weak sense, see \cite{JT15}.
Define the Parisi functional by
\begin{align*}
\mathcal{P}(\alpha,\beta)=\log 2+\Phi_{\alpha,\beta}(0,0)-\frac{\beta^2}{2}\int_0^1 \alpha(s)s\xi''(s)ds
\end{align*}
and the Parisi variational problem by 
\begin{align}
\label{eq:variational}
\mathcal{P}(\beta)=\min_{\alpha\in \mathcal{M}}\mathcal{P}(\alpha,\beta).
\end{align}
The famous Parisi formula says that the thermodynamic limit of the free energy can be computed through
\begin{align}\label{eq:ParisiEq23}
\lim_{N\rightarrow\infty}\frac{1}{N}\e\log \sum_{\sigma}\exp(-\beta H_N(\sigma))=\mathcal{P}(\beta).
\end{align} 
Following Guerra's discovery of replica symmetry breaking bound \cite{G03}, the first rigorous proof of this formula was given by Talagrand \cite{Tal06} in the setting of the mixed even $p$-spin model. The proof was later extended to the mixed $p$-spin model with odd $p$-spin interactions by Panchenko \cite{P12}. Recently, the authors proved in \cite{AChen14} that $\Phi_{\alpha,\beta}(0,0)$ defines a strictly convex functional in $\alpha$ with respect to the Lebesgue $L^1$-norm on $[0,1]$. In particular, this result established uniqueness of the minimizer in the Parisi variational problem \eqref{eq:variational}. Throughout this paper, we shall call such minimizer the Parisi measure and denote it by $\alpha_{P,\beta}$. 
For the qualitative properties of the Parisi measure, we refer the readers to \cite{AChen}.

Despite the facts that the Parisi formula was formulated 35 years ago and Talagrand's proof has appeared for more than a decade, Parisi's solution remains puzzling and counter-intuitive in many aspects. The well-known entropic principle in statistical mechanics suggests that the thermodynamic limit of the free energy should be written as a maximization problem in terms of the entropy and the internal energy over the generic thermodynamic states. Similarly, in large deviation theory, the Laplace-Varadhan formula advocates for a maximization involving an energy functional and a rate function. 
Nonetheless, these two methods fail short in providing a representation for the limiting free energy. As far as we know, a LDP approach was only successful in the study of the Random Energy Model (REM) and the Generalized Random Energy Model (GREM) as well as their variants in the papers of Bolthausen and Kistler \cite{EN08,EN09}.

More importantly, these theories indicate that both pairs (entropy and free energy in thermodynamics, energy functional and rate function in large deviation limits) should be related through a Legendre transformation. The aim of this paper is to establish such Legendre structure within the framework of Parisi's solution; we derive a new representation for $\mathcal P(\beta)$  that exhibits a Legendre duality, where the square of the inverse temperature and the functional order parameter are conjugate variables. This work is motivated by a recent ingenious talk given by Guerra at the institute of Henri Poincar\'e. He first conjectured that the thermodynamic limit of the free energy is concave in the squared inverse temperature. Second, if such concavity was indeed valid, the Parisi formula could be written as a new minimization \eqref{thm1:eq1} in Legendre form. 
In this paper, we give an affirmative answer to both Guerra's conjectures. Our first main result establishes the concavity of the reparametrized Parisi PDE solution in the squared inverse temperature. 

\begin{theorem}\label{thm2}
	For each $\alpha\in\mathcal{M},$ $\Phi_{\alpha,\sqrt{\gamma}}(0,0)$ is concave in $\gamma.$
\end{theorem}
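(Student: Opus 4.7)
The plan is to reduce the problem to step functions $\alpha$ by continuity, then use a spatial rescaling to push the $\gamma$-dependence of the Parisi PDE entirely into the terminal condition, and finally propagate the pointwise concavity of this terminal through the iterated Hopf-Cole representation.

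First, since the map $\alpha\mapsto\Phi_{\alpha,\beta}(0,0)$ is continuous in the $L^1([0,1])$ topology (as used in the authors' earlier work on strict convexity in $\alpha$), and step functions are dense in $\mathcal{M}$, it suffices to prove concavity in $\gamma$ for step $\alpha=\sum_{k=1}^n m_k\mathbf{1}_{[q_{k-1},q_k)}$ with $0=q_0<\cdots<q_n=1$ and $0\le m_1\le\cdots\le m_n\le 1$. For such $\alpha$, set $\tilde\Psi_\gamma(t,y):=\Phi_{\alpha,\sqrt\gamma}(t,\sqrt\gamma\,y)$. A chain-rule computation produces the $\gamma$-free PDE
\begin{equation*}
\partial_t\tilde\Psi_\gamma=-\tfrac{\xi''(t)}{2}\bigl(\partial_{yy}\tilde\Psi_\gamma+\alpha(t)(\partial_y\tilde\Psi_\gamma)^2\bigr),\qquad \tilde\Psi_\gamma(1,y)=\log\cosh(\sqrt\gamma\,y),
\end{equation*}
with $\tilde\Psi_\gamma(0,0)=\Phi_{\alpha,\sqrt\gamma}(0,0)$. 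All $\gamma$-dependence is now isolated in the terminal condition.

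A direct calculation establishes pointwise concavity of this terminal in $\gamma$: with $u=\sqrt\gamma y$, the second derivative equals $\frac{y}{4\gamma^{3/2}}g(u)$, where $g(u):=u\,\mathrm{sech}^2u-\tanh u$ is odd, vanishes at $0$, and satisfies $g'(u)=-2u\,\mathrm{sech}^2u\tanh u\le 0$ for all $u$, so $g$ is non-increasing. Hence $g(u)$ has the opposite sign of $u$, and $y\cdot g(\sqrt\gamma y)\le 0$ for every $y$. Combining with the Hopf-Cole transformation applied on each constant-level interval of $\alpha$ yields the recursion
\begin{equation*}
V_n(y)=\log\cosh(\sqrt\gamma\,y),\qquad V_{k-1}(y)=\tfrac{1}{m_k}\log\mathbb{E}\bigl[\exp\bigl(m_kV_k(y+N_k)\bigr)\bigr],
\end{equation*}
with independent $N_k\sim\mathcal{N}\bigl(0,\xi'(q_k)-\xi'(q_{k-1})\bigr)$ and $\tilde\Psi_\gamma(0,0)=V_0(0)$ (at levels with $m_k=0$, the operation is simply $\mathbb{E}(\cdot)$).

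The central obstacle is that $f\mapsto\frac{1}{m}\log\mathbb{E}e^{mf}$ does not preserve pointwise concavity in a parameter. Differentiating the recursion twice in $\gamma$ yields
\begin{equation*}
\partial^2_\gamma V_{k-1}(y)=\mathbb{E}_{Q_k}\!\bigl[\partial^2_\gamma V_k(y+N_k)\bigr]+m_k\,\mathrm{Var}_{Q_k}\!\bigl(\partial_\gamma V_k(y+N_k)\bigr),
\end{equation*}
where $Q_k$ denotes the tilting of the law of $N_k$ by $\exp\bigl(m_kV_k(y+N_k)\bigr)$; the second term is non-negative, so an induction on pointwise concavity alone fails. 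To overcome this, I would differentiate the Parisi PDE directly twice in $\gamma$ and apply Feynman-Kac with the Parisi process $dX_t=\gamma\xi''(t)\alpha(t)\partial_x\Phi_{\alpha,\sqrt\gamma}(t,X_t)\,dt+\sqrt{\gamma\xi''(t)}\,dW_t$, then use the PDE identity $\partial_{xx}\Phi+\alpha(\partial_x\Phi)^2=-2\partial_t\Phi/(\gamma\xi'')$ to collapse the Feynman-Kac expression, aiming to exploit the base-case identity $\partial_\gamma V_n(y)=\frac{y\,\partial_y V_n(y)}{2\gamma}$ which ties the $\gamma$-derivative to the spatial derivative in a way that should cancel the variance contribution at each level. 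The hard part is verifying that this reduction yields a manifestly non-positive expression; the real content of the theorem must be an identity specific to the $\log\cosh$ terminal that offsets the extra variance term produced by the $\frac{1}{m}\log\mathbb{E}e^{m(\cdot)}$ operation at every level of the recursion.
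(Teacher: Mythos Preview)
Your proposal contains a genuine gap: you correctly identify that the operation $f\mapsto\frac{1}{m}\log\mathbb{E}e^{mf}$ does not preserve pointwise concavity in $\gamma$, and you correctly compute the obstruction as the non-negative variance term $m_k\,\mathrm{Var}_{Q_k}(\partial_\gamma V_k)$. But the last paragraph does not overcome this obstruction; it only sketches a hope that some Feynman--Kac identity tied to the special structure of $\log\cosh$ will cancel the variance at every level. No such identity is exhibited, and the phrase ``the hard part is verifying that this reduction yields a manifestly non-positive expression'' is an admission that the proof is not complete. As it stands, the argument stops exactly at the point where the actual work begins.

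The paper avoids this obstruction by not attempting to propagate concavity of $\gamma\mapsto V_k(y)$ at all. Instead it computes the \emph{first} derivative explicitly,
\[
\partial_\gamma\Phi_{\alpha,\sqrt\gamma}(0,0)=\tfrac12\Bigl(\xi'(1)-\int_0^1\xi'(s)\,\mathbb{E}\bigl(u_{\alpha,\gamma}^0(s)\bigr)^2\,d\alpha(s)\Bigr),
\]
where $u_{\alpha,\gamma}^0(s)=\partial_x\Phi_{\alpha,\sqrt\gamma}(s,X_{\alpha,\gamma}(s))$ is the optimal control in the variational representation of the Parisi PDE. Concavity in $\gamma$ then reduces to showing that $\gamma\mapsto\mathbb{E}\bigl(u_{\alpha,\gamma}^0(s)\bigr)^2$ is non-decreasing for every $s$. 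This monotonicity is proved by an induction across the levels of the Hopf--Cole recursion, but the quantity being propagated is not a sign of a second derivative; it is a comparison inequality of the form $\partial_x\Psi_{\alpha,\gamma_1}(q_b,\sqrt{\gamma_1}x)\le\partial_x\Psi_{\alpha,\gamma_2}(q_b,\sqrt{\gamma_2}x)$ for $x\ge0$, together with monotonicity in $x$ of the tilted expectations. The inductive step uses FKG-type correlation inequalities for even/odd functions against Gaussian-tilted measures. This is precisely the ``identity specific to the $\log\cosh$ terminal'' you were looking for, but it enters as a monotonicity statement for the first $\gamma$-derivative, not as a cancellation in the second.
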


It is well understood that the original Parisi PDE solution $\Phi_{\alpha,\beta}$ is convex in $\beta$. Theorem \ref{thm2} goes in the completely opposite direction when the inverse temperature is reparametrized. Consider the Parisi variational problem \eqref{eq:variational} associated to the squared inverse temperature $\gamma=\beta^2$, that is, 
\begin{align*}
\hat{\mathcal{P}}(\gamma)&:=\mathcal{P}(\beta).
\end{align*}
Likewise, we set the reparametrized Parisi functional as
$$
\hp(\alpha,\gamma)=\mathcal{P}(\alpha,\beta)=\log 2+\Phi_{\alpha,\sqrt{\gamma}}(0,0)-\frac{\gamma}{2}\int_0^1 \alpha(s)s\xi''(s)ds.
$$
From Theorem \ref{thm2}, one sees that $\hp(\alpha,\gamma)$ is concave in $\gamma$ and consequently induces the concavity of $\hp(\gamma)$. Set the Legendre transform of $\hat{\mathcal{P}}(\alpha,\gamma)$ as
\begin{align*}
\hg(\alpha)&=\sup_{\gamma\in (0,\infty)}\Bigl(\hp(\alpha,\gamma)-\frac{\gamma}{2}\int_0^1\alpha(s)\xi'(s)ds\Bigr),\,\,\forall \alpha\in\mathcal{M}.
\end{align*}
Note that $\hg(\alpha)$ could be infinite, for instance, when $\alpha$ is induced by a Dirac measure at $1.$ As $\Phi_{\alpha,\sqrt{\gamma}}(0,0)$ is strictly convex in $\alpha$ \cite{AChen14}, $\hg$ is convex on $\mathcal{M}$ and this convexity is strict along any linear path connecting two distinct $\alpha$ and $\alpha'$ with finite $\hg(\alpha)$ and $\hg(\alpha').$ Denote by $\mathcal{M}_\xi$ the collection of all Parisi measures associated to any $\beta>0$. Our next main result below says that the functionals $\hp(\gamma)$ and $\hg(\alpha)$ are related through a Legendre variational principle and $\alpha$ and $\gamma$ are conjugate variables.
\begin{theorem}[Legendre structure]\label{thm1}
	We have that
	\begin{align}
	\begin{split}
	\label{thm1:eq1}
	\hp(\gamma)&=\inf_{\alpha\in\mathcal{M}}\Bigl(\hg(\alpha)+\frac{\gamma}{2}\int_0^1\alpha(s)\xi'(s)ds\Bigr),
	\end{split}
	\end{align}
	where the infimum is uniquely attained by the Parisi measure $\alpha_{P,\sqrt{\gamma}}$. Conversely, if $\alpha\in\mathcal{M}_\xi$, then
	\begin{align}
	\begin{split}
	\label{thm1:eq2}
	\hg(\alpha)&=\sup_{\gamma\in(0,\infty)}\Bigl(\hp(\gamma)-\frac{\gamma}{2}\int_0^1\alpha(s)\xi'(s)ds\Bigr),
	\end{split}
	\end{align}
	where the supremum is reached by some $\gamma$ with $\alpha_{P,\sqrt{\gamma}}=\alpha.$
\end{theorem}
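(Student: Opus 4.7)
My plan is to combine Theorem \ref{thm2} with an envelope argument at the Parisi minimizer. First, Theorem \ref{thm2} gives that $\gamma\mapsto \Phi_{\alpha,\sqrt{\gamma}}(0,0)$ is concave for each fixed $\alpha$; since adding the linear-in-$\gamma$ term $-\tfrac{\gamma}{2}\int_0^1\alpha(s)s\xi''(s)\,ds$ preserves concavity and taking an infimum in $\alpha$ does as well, both $\gamma\mapsto \hp(\alpha,\gamma)$ and $\hp(\gamma)$ are concave. Writing $\lambda_\alpha:=\tfrac{1}{2}\int_0^1\alpha(s)\xi'(s)\,ds$, the ``$\geq$'' direction of \eqref{thm1:eq1} is then immediate from the definition of $\hg$: for every $\alpha$ and $\gamma>0$, $\hg(\alpha)\geq \hp(\alpha,\gamma)-\lambda_\alpha\gamma$, hence $\hg(\alpha)+\lambda_\alpha\gamma\geq \hp(\alpha,\gamma)\geq \hp(\gamma)$.

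For the reverse direction I would plug in $\alpha=\alpha_{P,\sqrt{\gamma}}$. Since $\hp(\alpha_{P,\sqrt{\gamma}},\gamma)=\hp(\gamma)$, it suffices to show that the supremum defining $\hg(\alpha_{P,\sqrt{\gamma}})$ is attained at $\gamma'=\gamma$; by concavity in $\gamma'$, this reduces to the stationarity condition $\partial_{\gamma'} \hp(\alpha_{P,\sqrt{\gamma}},\gamma')|_{\gamma'=\gamma}=\lambda_{\alpha_{P,\sqrt{\gamma}}}$. I would establish this from two parallel identities. On one hand, the envelope theorem applied to $\hp(\gamma)=\min_\alpha \hp(\alpha,\gamma)$ at its unique Parisi minimizer --- valid at each point of differentiability of $\hp$, a full-measure set by concavity --- yields $\tfrac{d}{d\gamma}\hp(\gamma)=\partial_\gamma \hp(\alpha_{P,\sqrt{\gamma}},\gamma)$. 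On the other, differentiating the free energy identity \eqref{eq:ParisiEq23} in $\gamma=\beta^2$ and applying Gaussian integration by parts gives $\tfrac{d}{d\gamma}\hp(\gamma)=\tfrac{1}{2}\bigl(\xi(1)-\int_0^1\xi(t)\,d\alpha_{P,\sqrt{\gamma}}(t)\bigr)$, and a further integration by parts in $t$ (using $\xi(0)=0$ and $\alpha_{P,\sqrt{\gamma}}(1)=1$) rewrites this as $\lambda_{\alpha_{P,\sqrt{\gamma}}}$. Together these give the stationarity and hence $\hg(\alpha_{P,\sqrt{\gamma}})+\lambda_{\alpha_{P,\sqrt{\gamma}}}\gamma=\hp(\gamma)$, closing the inequality. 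Uniqueness of the minimizer then follows from the paper's strict convexity statement for $\hg$: a second minimizer would necessarily have finite $\hg$ (since its value matches that of $\alpha_{P,\sqrt{\gamma}}$, which is finite), and strict convexity of $\alpha\mapsto \hg(\alpha)+\lambda_\alpha\gamma$ along the segment joining it to $\alpha_{P,\sqrt{\gamma}}$ would contradict minimality.

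For the converse \eqref{thm1:eq2}, fix $\alpha=\alpha_{P,\sqrt{\gamma_0}}\in\mathcal{M}_\xi$. The ``$\geq$'' direction is trivial: for every $\gamma>0$ one has $\hp(\gamma)\leq \hp(\alpha,\gamma)$, so $\sup_\gamma(\hp(\gamma)-\lambda_\alpha\gamma)\leq \hg(\alpha)$. The reverse inequality and the attainment follow from the previous argument applied at $\gamma_0$: that step established $\hg(\alpha)=\hp(\gamma_0)-\lambda_\alpha\gamma_0$, which realizes the supremum at $\gamma_0$.

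The main obstacle I anticipate is the derivative computation in the middle step. Rigorously justifying the thermodynamic identity $\tfrac{d}{d\gamma}\hp(\gamma)=\lambda_{\alpha_{P,\sqrt{\gamma}}}$ requires identifying the thermodynamic limit of the empirical energy $\e\langle \xi(R_{1,2})\rangle$ with $\int_0^1\xi(t)\,d\alpha_{P,\sqrt{\gamma}}(t)$, which at the level of free-energy derivatives relies on overlap concentration via Ghirlanda--Guerra identities or the Panchenko ultrametric structure. The envelope identity analogously requires enough continuity of $\partial_\gamma \Phi_{\alpha,\sqrt{\gamma}}(0,0)$ in $\alpha$ (in the $L^1$ topology of $\mathcal{M}$) to pass cleanly to the minimizer.
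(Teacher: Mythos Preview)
Your proposal is correct and shares the paper's overall skeleton: the ``$\geq$'' direction by definition of $\hg$, the ``$\leq$'' direction by showing that $\gamma$ is a critical point (hence, by Theorem~\ref{thm2}, a maximizer) of $\gamma'\mapsto \hp(\alpha_{P,\sqrt{\gamma}},\gamma')-\lambda_{\alpha_{P,\sqrt{\gamma}}}\gamma'$, uniqueness from strict convexity of $\hg$, and the converse \eqref{thm1:eq2} by the same stationarity at some $\gamma_0$ with $\alpha_{P,\sqrt{\gamma_0}}=\alpha$.

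The one genuine difference is how you justify the stationarity $\partial_\gamma \hp(\alpha_{P,\sqrt{\gamma}},\gamma)=\lambda_{\alpha_{P,\sqrt{\gamma}}}$. The paper obtains this entirely inside the PDE framework: Proposition~\ref{lem1} computes $\partial_\gamma \hp(\alpha,\gamma)$ for \emph{every} $\alpha$ via the variational representation \eqref{lem1:proof:eq0}, and specializing to the Parisi measure using the known optimality condition $\e (u^0_{\alpha_{P,\sqrt{\gamma}},\gamma}(s))^2=s$ on its support yields \eqref{rmk2:eq1} directly---no envelope theorem, no free-energy limit, no overlap concentration. Your route instead splices together the envelope identity $\hp'(\gamma)=\partial_\gamma\hp(\alpha_{P,\sqrt{\gamma}},\gamma)$ and the thermodynamic formula $\hp'(\gamma)=\lambda_{\alpha_{P,\sqrt{\gamma}}}$. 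Both ingredients are available off the shelf: the thermodynamic formula is exactly Panchenko's \eqref{diff} from \cite{PD08}, so the overlap-identification obstacle you flag is already handled there, and the envelope step at every $\gamma$ (not just a.e.) is what the paper quotes as \cite[Lemma~2]{C15}. So your argument closes, but the paper's internal computation buys independence from the free-energy limit and, more importantly, delivers $\partial_\gamma\Psi_{\alpha,\gamma}(0,0)$ for arbitrary $\alpha$---the very formula that drives the proof of Theorem~\ref{thm2} on which you are already relying.
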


Equations \eqref{thm1:eq1} and \eqref{thm1:eq2} together describes a good Legendre structure as the second terms on the right-hand sides are both bilinear. From a numerical simulation, it seems to indicate that both Theorems \ref{thm2} and \ref{thm1} still hold if one adds an external field of the form, $\sqrt{\gamma} h$, for some $h\in\mathbb{R}$ to the model, but they will be invalid if the external field is simply $h$.

We add a few remarks here. First the maximization problem in \eqref{thm1:eq2} could have infinitely many maximizers. For instance, consider the SK model $\xi(t)=t^2/2$ and let $\alpha=1$ on $[0,1]$. It is known in \cite{ALR,G95} that $\alpha_{P,\beta}=1$ on $[0,1]$ if $\beta\in (0,1].$ This and \eqref{rmk2:eq1} below imply that the set of maximizers contains the interval $(0,1].$ Next note that the Parisi variational formula $\mathcal{P}(\beta)$ is a convex function, which can be deduced from the equality of \eqref{eq:ParisiEq23} and the convexity of the free energy, but it is by no means clear how to justify this property directly from the variation formula itself. In this regard, the new expression \eqref{thm1:eq1} for the reparametrized Parisi formula seems to be more natural as it not only retains the uniqueness of the minimizer,  but also straightforwardly leads to the concavity of $\hp(\gamma)$. Alternatively, one may derive the Legendre duality by considering the Legendre transform of $\hp(\gamma)$ rather than using $\hp(\alpha,\gamma)$, but one will then lose the uniqueness of the minimizer, see Remark \ref{rmk1} after the proof of Theorem \ref{thm1}. Now we give two direct consequences of Theorem \ref{thm1}.

\begin{proposition}\label{prop1}
	Let $0<\beta_1<\beta_2$ and let $\alpha_{P,\beta_1},\alpha_{P,\beta_2}$ be the associated Parisi measures for $\mathcal{P}(\beta_1),\mathcal{P}(\beta_2)$, respectively. Then we have
	\begin{align*}
	\lim_{N\rightarrow\infty}\e \left<\xi(R_{1,2})\right>_{\beta_1}=\int_0^1\xi d\alpha_{P,\beta_1}\leq \int_0^1\xi d\alpha_{P,\beta_2}=\lim_{N\rightarrow\infty}\e \left<\xi(R_{1,2})\right>_{\beta_2},
	\end{align*}
	where $\left<\cdot\right>_{\beta_1}$ and $\left<\cdot\right>_{\beta_2}$ are the Gibbs averages corresponding to the Boltzmann weights $e^{-\beta_1H_N}$ and $e^{-\beta_2H_N}$ and $R_{1,2}$ is the overlap between $\sigma^1$ and $\sigma^2$, two i.i.d. samplings from these Gibbs averages. 
\end{proposition}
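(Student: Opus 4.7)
The plan is to read the derivative $\hp'(\gamma)$ in two ways -- from the envelope of the Legendre representation \eqref{thm1:eq1}, and from Gaussian integration by parts on the finite-volume free energy -- and then invoke the concavity in Theorem \ref{thm2}. First, I apply an envelope argument to \eqref{thm1:eq1}: for fixed $\gamma$ with unique minimizer $\alpha^\ast=\alpha_{P,\sqrt{\gamma}}$, the affine function $\gamma'\mapsto \hg(\alpha^\ast)+\frac{\gamma'}{2}\int_0^1\alpha^\ast(s)\xi'(s)\,ds$ majorizes the concave function $\hp$ and is tangent at $\gamma$. Hence, at any differentiability point of $\hp$,
\begin{align*}
\hp'(\gamma)=\frac{1}{2}\int_0^1 \alpha_{P,\sqrt{\gamma}}(s)\xi'(s)\,ds=\frac{1}{2}\Bigl(\xi(1)-\int_0^1\xi\,d\alpha_{P,\sqrt{\gamma}}\Bigr),
\end{align*}
the second equality being Stieltjes integration by parts using $\xi(0)=0$ and $\alpha_{P,\sqrt{\gamma}}(1)=1$.

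Second, Gaussian integration by parts applied to $F_N(\beta):=\frac{1}{N}\e\log\sum_\sigma e^{-\beta H_N(\sigma)}$ yields the standard identity $\frac{d}{d\gamma}F_N(\sqrt{\gamma})=\frac{1}{2}(\xi(1)-\e\langle\xi(R_{1,2})\rangle_{\sqrt{\gamma}})$. The Parisi formula \eqref{eq:ParisiEq23}, convexity of $\beta\mapsto F_N(\beta)$, and Griffiths' lemma (convergence of derivatives of convex functions at points of differentiability of the limit) together give $\frac{d}{d\gamma}F_N(\sqrt{\gamma})\to \hp'(\gamma)$ at every $\gamma$ where $\hp$ is differentiable. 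Matching with the display above identifies $\lim_N\e\langle\xi(R_{1,2})\rangle_{\sqrt{\gamma}}=\int_0^1\xi\,d\alpha_{P,\sqrt{\gamma}}$ at each such $\gamma$.

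Third, since $\hp$ is concave by Theorem \ref{thm2}, $\hp'$ is non-increasing wherever defined; by step one this forces $\gamma\mapsto\int_0^1\xi\,d\alpha_{P,\sqrt{\gamma}}$ to be non-decreasing. Taking $\gamma_i=\beta_i^2$ gives the desired inequality. The main obstacle is extending both the identification and the monotonicity from almost every $\beta$ to every $\beta>0$: Griffiths' lemma only yields convergence at differentiability points of $\hp$, and handling the at-most-countable exceptional set will likely require either a weak-continuity property of $\beta\mapsto\alpha_{P,\beta}$ (of the sort studied in \cite{AChen}) or a separate identification of the one-sided derivatives of $\hp$ with Parisi measures obtained as appropriate one-sided limits.
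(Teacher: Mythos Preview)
Your outline is on the right track and converges to the same mechanism as the paper---concavity of $\hp$ forces $\hp'(\gamma)=\tfrac12\int_0^1\alpha_{P,\sqrt{\gamma}}(s)\xi'(s)\,ds$ to be non-increasing, hence $\int_0^1\xi\,d\alpha_{P,\sqrt{\gamma}}$ to be non-decreasing---but the gap you flag at the end is real, and the paper closes it by a different and more direct route than the one you sketch.

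The paper does not rely on an envelope argument for \eqref{thm1:eq1}, nor on Griffiths' lemma. Instead, the identity $\hp'(\gamma)=\tfrac12\int_0^1\alpha_{P,\sqrt{\gamma}}(s)\xi'(s)\,ds$ is obtained for \emph{every} $\gamma>0$ as equation \eqref{rmk2:eq1}, a direct consequence of the derivative formula in Proposition~\ref{lem1} (via the variational representation of the Parisi PDE) together with the characterization $\e u_{\alpha_{P,\sqrt{\gamma}},\gamma}^0(s)^2=s$ on $\mbox{supp}\,\alpha_{P,\sqrt{\gamma}}$ from \cite{C15}. Since this holds pointwise in $\gamma$, concavity of $\hp$ immediately gives the inequality $\int_0^1\xi\,d\alpha_{P,\beta_1}\le\int_0^1\xi\,d\alpha_{P,\beta_2}$ for all $0<\beta_1<\beta_2$, with no exceptional set to patch. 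Likewise, the identification $\lim_N\e\langle\xi(R_{1,2})\rangle_\beta=\int_0^1\xi\,d\alpha_{P,\beta}$ is taken directly from \cite{PD08}, where it is established for every $\beta$, rather than recovered almost everywhere via convergence of derivatives.

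Your approach has the merit of being more self-contained (you re-derive the overlap identification rather than cite it), but the price is exactly the countable-exceptional-set issue you name. If you wish to complete your argument along your lines, note that the continuity $\beta\mapsto\alpha_{P,\beta}$ (Proposition~\ref{prop0}) would indeed suffice, and in the paper that continuity is itself a consequence of Theorem~\ref{thm1}; so invoking it here is not circular. Alternatively, simply replace your envelope step by the explicit formula \eqref{rmk2:eq1} and the gap disappears.
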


Heuristically, one would expect some kind of stochastic monotonicity of the Parisi measures in the inverse temperature. Proposition \ref{prop1} gives a partial manifestation in this direction. Next, we show that the mixed $p$-spin model does not have a first order phase transition.

\begin{proposition}
	\label{prop0} 
	The mapping $\beta\mapsto \alpha_{P,\beta}$ is continuous and $\mathcal{P}(\beta)$ is continuously differentiable.
\end{proposition}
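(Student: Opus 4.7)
\textbf{Proof plan for Proposition~\ref{prop0}.}

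\emph{Stage 1: Continuity of $\beta \mapsto \alpha_{P,\beta}$.} My plan is to combine compactness of $\mathcal{M}$ with uniqueness of the Parisi measure. The space $\mathcal{M}$, consisting of monotone $[0,1]$-valued functions on $[0,1]$, is sequentially compact in $L^1([0,1])$ by Helly's selection theorem together with dominated convergence. For $\beta_n \to \beta_0 > 0$, any subsequence of $\alpha_{P,\beta_n}$ would then admit a further $L^1$-convergent subsequence with limit $\tilde{\alpha} \in \mathcal{M}$. Joint continuity of $(\alpha,\beta)\mapsto \mathcal{P}(\alpha,\beta)$ on $\mathcal{M}\times(0,\infty)$---which follows from the $L^1$-Lipschitz estimate $|\Phi_{\alpha,\beta}(0,0)-\Phi_{\alpha',\beta}(0,0)|\leq C(\beta)\|\alpha-\alpha'\|_{L^1}$ of \cite{AChen14,JT15}, combined with continuity of $\beta\mapsto\Phi_{\alpha,\beta}(0,0)$ (itself convex, hence continuous, in $\beta$)---lets me pass to the limit in $\mathcal{P}(\alpha_{P,\beta_n},\beta_n)\leq \mathcal{P}(\alpha,\beta_n)$ to conclude that $\tilde{\alpha}$ minimizes $\mathcal{P}(\cdot,\beta_0)$. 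Uniqueness of the Parisi measure \cite{AChen14} forces $\tilde{\alpha}=\alpha_{P,\beta_0}$, so the full sequence converges.

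\emph{Stage 2: Continuous differentiability of $\mathcal{P}$.} Since $\beta\mapsto\beta^2$ is a smooth diffeomorphism of $(0,\infty)$, it suffices to show that $\hp(\gamma)$ is $C^1$. Define
\begin{equation*}
I(\gamma):=\int_0^1\alpha_{P,\sqrt{\gamma}}(s)\,\xi'(s)\,ds,
\end{equation*}
which is continuous on $(0,\infty)$ by Stage~1 and bounded convergence. From the Legendre representation~\eqref{thm1:eq1} in Theorem~\ref{thm1}, for all $\gamma,\gamma_0>0$,
\begin{equation*}
\hp(\gamma_0)=\hg(\alpha_{P,\sqrt{\gamma_0}})+\tfrac{\gamma_0}{2}I(\gamma_0),\qquad \hp(\gamma)\leq \hg(\alpha_{P,\sqrt{\gamma_0}})+\tfrac{\gamma}{2}I(\gamma_0),
\end{equation*}
and subtraction gives $\hp(\gamma)-\hp(\gamma_0)\leq \tfrac{1}{2}(\gamma-\gamma_0)I(\gamma_0)$. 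Thus $\tfrac{1}{2}I(\gamma_0)$ is a supergradient of the concave function $\hp$ (concavity from Theorem~\ref{thm2}) at every $\gamma_0>0$. At each point of differentiability $\gamma$---a co-countable set by concavity---the supergradient is a singleton, forcing $\hp'(\gamma)=\tfrac{1}{2}I(\gamma)$ there. Using continuity of $I$ and the standard fact that the one-sided derivatives of a concave function are limits of the derivative along dense subsets of differentiability points, I would then upgrade this to $\hp'(\gamma_0^+)=\hp'(\gamma_0^-)=\tfrac{1}{2}I(\gamma_0)$ at every $\gamma_0>0$, which yields $\hp\in C^1$ with derivative $\tfrac{1}{2}I$.

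\emph{Main obstacle.} The only nontrivial analytic input is the joint continuity of $\mathcal{P}(\alpha,\beta)$, specifically an $L^1$-Lipschitz bound for $\alpha\mapsto\Phi_{\alpha,\beta}(0,0)$ that is uniform on compact $\beta$-intervals; this is already available in \cite{AChen14,JT15}. After that, Stage~1 is a standard compactness-plus-uniqueness argmin argument, and Stage~2 is a clean envelope-style application of Theorem~\ref{thm1}: the affine-in-$\gamma$ tangent functions $\gamma\mapsto\hg(\alpha_{P,\sqrt{\gamma_0}})+\tfrac{\gamma}{2}I(\gamma_0)$ supply continuously varying supergradients, and concavity then forces them to be the actual derivatives.
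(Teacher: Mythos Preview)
Your proof is correct and tracks the paper's strategy closely. For Stage~1, both arguments are the standard compactness-plus-uniqueness argmin argument; you pass limits in the original Parisi functional $\mathcal{P}(\alpha,\beta)$ and invoke uniqueness of the Parisi measure there, whereas the paper passes limits through the Legendre representation $\mathcal{P}(\beta)=\hg(\alpha_{P,\beta})+\tfrac{\beta^2}{2}\int_0^1\alpha_{P,\beta}\xi'\,ds$ and appeals to uniqueness in~\eqref{thm1:eq1}. This is a cosmetic difference (your version arguably avoids having to check lower semicontinuity of $\hg$). For Stage~2, the paper simply quotes the already-established derivative formula~\eqref{diff} (equivalently~\eqref{rmk2:eq1}) and observes that its right-hand side is continuous by Stage~1. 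Your envelope/supergradient argument instead re-derives this formula directly from Theorem~\ref{thm1}: the affine majorants $\gamma\mapsto\hg(\alpha_{P,\sqrt{\gamma_0}})+\tfrac{\gamma}{2}I(\gamma_0)$ furnish a continuous family of supergradients for the concave function $\hp$, forcing $\hp\in C^1$ with $\hp'=\tfrac{1}{2}I$. This is slightly more work but has the virtue of staying entirely inside the Legendre framework without invoking~\eqref{diff} as a separate input.
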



The rest of the paper is organized as follows. In next section, we will discuss the analogue of Theorems 1 and 2 for the REM discovered by Guerra \cite{G15}. In Section 3, we compute the derivative of the Parisi PDE solution  in temperature. The main novelty and crucial tool of the section is the use of the variational representation of the Parisi PDE solution obtained in authors' previous work \cite{AChen14}. In Section 4, we provide some auxiliary technical  lemmas while in Section 5, we build on the computations of Sections 3 and 4 to provide the proof of Theorems 1, 2 and Proposition \ref{prop1}. 

\smallskip
\smallskip

\noindent {\bf Acknowledgements.} W.-K. C. is indebted to Francesco Guerra for motivating this research work and conducting several enlightening discussions while he was participating in the trimester event ``Disordered Systems, Random Spatial Processes and Some Applications'' at the institute of Henri Poincar\'e in March 2015. A. A. and W.-K. C. thank Nicola Kistler for many fruitful discussions and bringing \cite{EN08,EN09} to their attention and Dmitry Panchenko for valuable suggestions regarding the presentation of the paper. The research of A. A. is supported by NSF grant DMS-1517864. The research of W.-K. C. is supported by NSF grant DMS-1513605, NSF-Simons Travel Grant, NSF Travel Support for IHP Trimester in Probability and Hong Kong Research Grants Council GRF-14302515.

\section{The Legendre formulation in the REM}

Recall that the REM is defined on the hypercube $\Sigma_N$ and its Hamiltonian $(X_N(\sigma):\sigma\in\Sigma_N)$ is a collection of i.i.d. standard Gaussian random variables. In this section, we will explain the Legendre structure in the REM model following Guerra \cite{G15}. Note that the thermodynamic limit of the free energy in the REM has a Parisi-type variational representation (see \cite{G15}),
\begin{align*}
\lim_{N\rightarrow\infty}\frac{1}{N}\e\log \sum_\sigma \exp(-\beta \sqrt{N}X_N(\sigma))=\mathcal{P}_{\mbox{\tiny REM}}(\beta),
\end{align*}
where for a given standard Gaussian random variable $z$, 
\begin{align*}
\mathcal{P}_{\mbox{\tiny REM}}(\beta):=\inf_{m\in[0,1]}\Bigl(\frac{\log 2}{m}+\frac{1}{m}\log \e e^{m\beta z}\Bigr)=\inf_{m\in[0,1]}\Bigl(\frac{\log 2}{m}+\frac{\beta^2 m}{2}\Bigr).
\end{align*}
From the last equation, if one considers the reparametrized Parisi formula, 
\begin{align*}
\hat{\mathcal{P}}_{\mbox{\tiny REM}}(\gamma):=\mathcal{P}_{\mbox{\tiny REM}}(\beta)
\end{align*}
for $\gamma=\beta^2,$ then $\hat{\mathcal{P}}_{\mbox{\tiny REM}}$ is a concave function of $\gamma.$ In fact, there is an interpolation argument of obtaining such concavity without knowing the explicit form of the Parisi-type formula, which is due to Guerra \cite{G15} and the argument runs essentially in the same way as \cite{G03}:

\begin{proposition} $\hat{\mathcal{P}}_{\mbox{\tiny REM}}(\gamma)$ is concave in $\gamma.$ 
\end{proposition}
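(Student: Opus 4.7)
The plan is to apply Guerra's \cite{G03} replica-symmetry-breaking interpolation directly to the REM. The pay-off is that the interpolation produces an upper bound on $F_N(\gamma):=\frac{1}{N}\e\log\sum_\sigma\exp(\sqrt{N\gamma}X_N(\sigma))$ that, after taking a suitable infimum, is manifestly concave in $\gamma$; a matching lower bound specific to the REM then transfers the concavity to $\hat{\mathcal{P}}_{\mbox{\tiny REM}}$ itself.

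Concretely, for each parameter $m\in(0,1]$ one introduces a $1$-step Ruelle probability cascade with parameter $m$ together with an appropriate independent Gaussian field, setting up a one-parameter Hamiltonian $H_t(\sigma,\alpha)$, $t\in[0,1]$, whose endpoints are the original REM Hamiltonian at $t=1$ and a cascade-based reference at $t=0$ whose free energy equals the Parisi-type value $g_m(\gamma):=\log 2/m+\gamma m/2$. Differentiating the interpolating free energy $\phi_N(t)$ in $t$ and applying Gaussian integration by parts (as in \cite{G03}) yields a remainder term whose sign can be controlled because of the special structure of the REM: the spin overlap is $\{0,1\}$-valued and the cascade's nested Poisson--Dirichlet structure produces the correct sign in the IBP identity. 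One thereby obtains the finite-volume Guerra bound
\[
F_N(\gamma)\leq g_m(\gamma)=\frac{\log 2}{m}+\frac{\gamma m}{2}\qquad\text{for every }N\geq 1\text{ and every }m\in(0,1].
\]
Letting $N\to\infty$ and then taking the infimum over $m$,
\[
\hat{\mathcal{P}}_{\mbox{\tiny REM}}(\gamma)\leq\inf_{m\in(0,1]}g_m(\gamma).
\]
Because each $g_m(\gamma)$ is affine (hence concave) in $\gamma$, this infimum is itself a concave function of $\gamma$.

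Finally, a matching lower bound $\hat{\mathcal{P}}_{\mbox{\tiny REM}}(\gamma)\geq\inf_m g_m(\gamma)$ is classical and elementary for the REM: in the high-temperature regime ($\gamma\leq 2\log 2$) a second-moment computation on the partition function gives $\log 2+\gamma/2=g_1(\gamma)$, while in the low-temperature regime an extreme-value analysis of $\max_\sigma X_N(\sigma)$ gives $\sqrt{2\gamma\log 2}=\inf_m g_m(\gamma)$. Combined with the Guerra upper bound above, we conclude $\hat{\mathcal{P}}_{\mbox{\tiny REM}}(\gamma)=\inf_m g_m(\gamma)$, and the concavity of the right-hand side yields the claim.

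The main obstacle is verifying the sign of the interpolation derivative $\phi_N'(t)$, which is the heart of Guerra's scheme in \cite{G03}. The REM setting simplifies this step considerably: since the spin overlap is already $\{0,1\}$-valued, the Ghirlanda--Guerra identities that are normally required in the mixed $p$-spin case become trivial, and the relevant IBP identity reduces to a short direct calculation---this is the precise sense in which, as the paper notes, the concavity is obtained ``without knowing the explicit form of the Parisi-type formula.''
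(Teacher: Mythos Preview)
Your argument is logically sound but takes a different route from the paper, and in doing so misses the point of the proposition. The paper's proof interpolates directly between \emph{two REM systems at different temperatures}: writing $\gamma=(1-\lambda)\gamma_1+\lambda\gamma_2$ and splitting $\Sigma_{M+N}=\Sigma_M\times\Sigma_N$ with $N/(M+N)\to\lambda$, one builds an interpolated Hamiltonian whose free energy at $t=0$ is the convex combination $(1-\lambda)\hat{\mathcal P}_{\mathrm{REM}}(\gamma_1)+\lambda\hat{\mathcal P}_{\mathrm{REM}}(\gamma_2)$ (in the limit) and at $t=1$ is $\hat{\mathcal P}_{\mathrm{REM}}(\gamma)$; Gaussian integration by parts shows $\varphi'(t)\ge 0$ asymptotically because the overlaps are $\{0,1\}$-valued, yielding the concavity inequality directly. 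No cascade, no Parisi-type formula, no lower bound is needed. Your approach, by contrast, first establishes $\hat{\mathcal P}_{\mathrm{REM}}(\gamma)=\inf_{m}g_m(\gamma)$ via a Guerra--RSB upper bound plus a separate REM-specific lower bound, and then reads off concavity from the infimum of affine functions. This is exactly the ``trivial'' observation the paper makes in the sentence \emph{preceding} the proposition; the proposition is expressly offered as an alternative argument that \emph{avoids} the explicit formula. Your closing paragraph therefore misreads the paper: the phrase ``without knowing the explicit form of the Parisi-type formula'' refers to the two-temperature interpolation, not to the cascade interpolation you describe, and your route manifestly does rely on computing the formula.
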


\begin{proof}
	Let $\gamma_1,\gamma_2\geq0$ and $\gamma=(1-\lambda)\gamma_1+\lambda\gamma_2.$ For $M,N\geq 1$, write $\Sigma_{M+N}=\Sigma_M\times\Sigma_N.$ Let $(X_{M+N}(\tau):\tau\in\Sigma_{M+N})$, $(X_M(\rho):\rho\in\Sigma_M)$ and $(X_N(\sigma):\sigma\in\Sigma_N)$ be i.i.d. standard Gaussian random variables. Consider partition functions,
	\begin{align*}
	Z_{M+N}({\gamma})&=\sum_{\tau}e^{-\sqrt{\gamma(M+N)}X_{M+N}(\tau)},\\
	Z_M({\gamma_1})&=\sum_{\rho}e^{-\sqrt{\gamma_1M}X_M(\rho)},\\
	Z_N({\gamma_2})&=\sum_{\sigma}e^{-\sqrt{\gamma_2N}X_N(\sigma)}.
	\end{align*}
	Set the interpolated Hamiltonian,
	$$
	X_{N,t}(\tau)=\sqrt{t\gamma(M+N)}X_{M+N}(\tau)+\sqrt{1-t}(\sqrt{\gamma_1M}X_M(\rho)+\sqrt{\gamma_2N}X_N(\sigma))
	$$
	for $\tau=(\rho,\sigma)\in\Sigma_{M+N}$ and define
	\begin{align*}
	\varphi(t)&=\frac{1}{M+N}\e\log \sum_{{\tau}}\exp X_{N,t}(\tau), 
	\end{align*}
	where the summation is over all $\tau=(\rho,\sigma)\in\Sigma_{M+N}.$ In particular, 
	\begin{align*}
	\varphi(0)&=\frac{M}{M+N}\bigg(\frac{1}{M} \mathbb E \log Z_M({ \gamma_{1}})\bigg)+\frac{N}{M+N}\bigg(\frac{1}{N} \mathbb E \log Z_N({ \gamma_{2}})\bigg),\\
	\varphi(1)&=\frac{1}{M+N} \mathbb E \log Z_{M+N}(\gamma).
	\end{align*}
	Now from Gaussian integration by parts formula, one may compute 
	\begin{align}\label{prop2:proof:eq1}
	\varphi'(t)&= -\frac{1}{2}\e\Bigl<\Bigl(\gamma \delta_{\rho^1,\rho^2}\delta_{\sigma^1,\sigma^2}-\frac{\gamma_1M}{M+N}\delta_{\rho^1,\rho^2}-\frac{\gamma_2N}{M+N}\delta_{\sigma^1,\sigma^2}\Bigr)\Bigr>_t,
	\end{align}
	where $\left<\cdot\right>_t$ is the Gibbs expectation with respect to Hamiltonian $X_{N,t}$ and $\delta_{a,b}$ is the Dirac measure. If now we take $N/(M+N)\rightarrow\lambda,$ then the huge term inside the Gibbs expectation will be non-positive when $M,N$ tends to infinity. Consequently, 
	\begin{align*}
	\hp_{\tiny REM}({\gamma})=\lim_{N+M\rightarrow\infty}\varphi(1)&\geq\lim_{N,M\rightarrow\infty}\varphi(0)=(1-\lambda) \hp_{\tiny REM}({\gamma_1})+\lambda\hp_{\tiny REM}({\gamma_2}).
	\end{align*}
\end{proof}

Remark that the proceeding approach does not work in establishing the concavity of the reparametrized Parisi formula in the mixed $p$-spin model as now one sees no clear way why in the limit the derivative of the corresponding interpolation should have a nonnegative sign in this case. The Legendre structure of $\hp_{\mbox{\tiny REM}}$ is established as follows. Consider the Legendre transform of $\hp_{\mbox{\tiny REM}}(\gamma)$ by
$$
\hat{\Gamma}_{\tiny \mbox{REM}}(m)=\sup_{\gamma\geq 0}\Bigl(\hat{\mathcal{P}}_{\mbox{\tiny REM}}(\gamma)-\frac{\gamma m}{2}\Bigr).
$$
Note that a straightforward computation gives \begin{align*}
\hat{\mathcal{P}}_{\mbox{\tiny REM}}(\gamma)&=
\left\{
\begin{array}{ll}
\frac{\gamma}{2}+\log 2,&\mbox{if $\gamma\leq {2\log 2}$},\\
\\
\sqrt{2\gamma\log 2},&\mbox{if $\gamma>{2\log 2}$},
\end{array}
\right.
\end{align*}
which implies that for $0<m\leq 1,$ 
\begin{align*}
\Bigl(\hat{\mathcal{P}}_{\mbox{\tiny REM}}(\gamma)-\frac{\gamma m}{2}\Bigr)'&=
\left\{
\begin{array}{ll}
\frac{1}{2}(1-m),&\mbox{if $\gamma\leq {2\log 2}$},\\
\\
\frac{1}{2}\sqrt{\frac{2\log 2}{\gamma}}-\frac{m}{2},&\mbox{if $\gamma>{2\log 2}$}.
\end{array}
\right.
\end{align*}
As a result, one sees that the maximizer of the variational problem $\hg_{\mbox{\tiny REM}}$ is any $\gamma\leq 2\log 2$ if $m=1$ and $\gamma=2\log 2/m^2$ if $0<m<1.$ Consequently, $\hat{\Gamma}_{\tiny \mbox{REM}}(m)=\log 2/m$ and thus, from the Parisi-type formula, the Legendre conjugacy holds
\begin{align*}
\hat{\mathcal{P}}_{\mbox{\tiny REM}}(\gamma)&=\inf_{m\in[0,1]}\Bigl(\hat{\Gamma}_{ \mbox{\tiny REM}}(m)+\frac{\gamma m}{2}\Bigr),
\end{align*}
where as in the case of the mixed $p$-spin model, $\hat{\Gamma}_{\tiny \mbox{REM}}(m)$ is a convex function and the second term is both linear in $m$ and $\gamma.$

\section{Derivative of the Parisi PDE solution in temperature}

For the rest of the paper, we denote $$\Psi_{\alpha,\gamma}=\Phi_{\alpha,\sqrt{\gamma}}\,\,\mbox{and}\,\,\zeta=\xi''.$$ 
Our first step to prove the concavity of $\Psi_{\alpha,\gamma}$ is to get a nice expression for the derivative of this function in $\gamma$. For technical purpose and simplicity, we shall take advantage of the variational representation for the Parisi PDE solution established in \cite{AChen14}, which we describe now. Let $W$ be a standard Brownian motion. For any $\alpha\in\mathcal{M}$ and $\gamma\geq 0,$ recall from \cite[Theorem 2]{AChen14} that the Parisi PDE solution $\Psi_{\alpha,\gamma}$ admits the following variational representation,
\begin{align}\label{lem1:proof:eq0}
\Psi_{\alpha,\gamma}(0,x)=\max_{u}\e\Bigl[\log \cosh\Bigl(x+\gamma\int_0^1\alpha\zeta uds+\gamma^{1/2}\int_0^1\zeta^{1/2}dW\Bigr)-\frac{\gamma}{2}\int_0^1\alpha\zeta u^2ds\Bigr],
\end{align}
where $u$ is taken over all progressively measurable processes with respect to the filtration generated by $W$. Here the maximum is realized by the process $$u_{\alpha,\gamma}^x(s):=\partial_x\Psi_{\alpha,\gamma}(s,X_{\alpha,\gamma}(s)),\,\,\forall s\in[0,1]$$ for $X_{\alpha,\gamma}=(X_{\alpha,\gamma}(s))_{0\leq s\leq 1}$ satisfying
\begin{align*}
dX_{\alpha,\gamma}&=\gamma\alpha\zeta\partial_x\Psi_{\alpha,\gamma}(s,X_{\alpha,\gamma})ds+\gamma^{1/2}\zeta^{1/2}dW
\end{align*}
with initial condition $X_{\alpha,\gamma}(0)=x.$ Originally introduced in \cite{AChen14}, the variational representation \eqref {lem1:proof:eq0} was used to establish the strict convexity of the Parisi PDE solution in $\alpha$. The argument therein relies on some further properties about the maximizer and we found that those are equally useful in our computation. More precisely, from \cite[Lemma 2]{AChen14}, one has two identities, for $0\leq t\leq t'\leq 1,$
\begin{align}\label{lem1:proof:eq1}
\partial_x\Psi_{\alpha,\gamma}(t',X_{\alpha,\gamma}(t'))-\partial_x\Psi_{\alpha,\gamma}(t,X_{\alpha,\gamma}(t))&=\gamma^{1/2}\int_t^{t'} \zeta^{1/2}\partial_{xx}\Psi_{\alpha,\gamma}(s,X_{\alpha,\gamma})dW
\end{align} 
and
\begin{align}
\begin{split}\label{lem1:proof:eq2}
&\partial_{xx}\Psi_{\alpha,\gamma}(t',X_{\alpha,\gamma}(t'))-\partial_{xx}\Psi_{\alpha,\gamma}(t,X_{\alpha,\gamma}(t))\\
&=-\gamma\int_t^{t'}\alpha \zeta(\partial_{xx}\Psi_{\alpha,\gamma}(s,X_{\alpha,\gamma}))^2ds+\gamma^{1/2}\int_t^{t'}\zeta^{1/2}\partial_{x}^3\Psi_{\alpha,\gamma}(s,X_{\alpha,\gamma})dW.
\end{split}
\end{align}

\begin{proposition}\label{lem1}
	For any $\gamma>0$, we have that
	\begin{align}\label{lem1:eq1}
	\partial_\gamma \Psi_{\alpha,\gamma}(0,x)&=\frac{1}{2}\Bigl(\xi'(1)-\int_0^1\xi'\e (u_{\alpha,\gamma}^x)^2d\alpha\Bigr).
	\end{align}
\end{proposition}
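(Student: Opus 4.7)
The plan is to exploit the variational representation \eqref{lem1:proof:eq0} via an envelope-theorem argument. Since the supremum is attained at $u = u_{\alpha,\gamma}^x$ and the first-order optimality condition makes the objective stationary in $u$ at this maximizer, differentiating in $\gamma$ reduces to keeping $u$ fixed at $u_{\alpha,\gamma}^x$ and taking the partial derivative in $\gamma$ of the explicit expression under the expectation. Recognising that at the optimum the argument of $\log\cosh$ is $X_{\alpha,\gamma}(1)$, so that $\tanh(X_{\alpha,\gamma}(1)) = u_{\alpha,\gamma}^x(1)$, one is led to
\begin{align*}
\partial_\gamma\Psi_{\alpha,\gamma}(0,x) &= \e\Bigl[u_{\alpha,\gamma}^x(1)\int_0^1\alpha\zeta\, u_{\alpha,\gamma}^x\,ds + \frac{1}{2\sqrt{\gamma}}\, u_{\alpha,\gamma}^x(1)\int_0^1\zeta^{1/2}dW\Bigr] \\
&\qquad - \frac{1}{2}\int_0^1\alpha\zeta\,\e[(u_{\alpha,\gamma}^x)^2]\,ds.
\end{align*}

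I would then exploit the martingale structure recorded in \eqref{lem1:proof:eq1}: since $du_{\alpha,\gamma}^x(s) = \sqrt{\gamma}\zeta^{1/2}\partial_{xx}\Psi_{\alpha,\gamma}\,dW$, the identity $\e[u_{\alpha,\gamma}^x(1)u_{\alpha,\gamma}^x(s)] = \e[(u_{\alpha,\gamma}^x(s))^2]$ handles the first term, while Ito isometry gives $\e\bigl[u_{\alpha,\gamma}^x(1)\int_0^1\zeta^{1/2}dW\bigr] = \sqrt{\gamma}\int_0^1\zeta\,\e[\partial_{xx}\Psi_{\alpha,\gamma}]\,ds$. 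Substituting yields the intermediate identity
\begin{align*}
\partial_\gamma\Psi_{\alpha,\gamma}(0,x) = \frac{1}{2}\int_0^1\alpha\zeta\,\e[(u_{\alpha,\gamma}^x)^2]\,ds + \frac{1}{2}\int_0^1\zeta\,\e[\partial_{xx}\Psi_{\alpha,\gamma}]\,ds.
\end{align*}
To match the form \eqref{lem1:eq1}, I would apply Ito's formula to $s \mapsto \xi'(s)\partial_{xx}\Psi_{\alpha,\gamma}(s, X_{\alpha,\gamma}(s))$, using \eqref{lem1:proof:eq2} for the dynamics of $\partial_{xx}\Psi_{\alpha,\gamma}$, and perform a Riemann-Stieltjes integration by parts of $\int_0^1 \xi'\,\e[(u_{\alpha,\gamma}^x)^2]\,d\alpha$ (valid since $\xi'(0) = 0$ and $\alpha(1) = 1$), together with the terminal identity $\partial_{xx}\Psi_{\alpha,\gamma}(1,\cdot) = 1 - \tanh^2$. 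These three ingredients cancel neatly to produce \eqref{lem1:eq1}.

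The main obstacle I anticipate is the rigorous use of the envelope step: a priori $u_{\alpha,\gamma}^x$ itself depends on $\gamma$, so one must justify both differentiating under the supremum and commuting $\partial_\gamma$ with the expectation. A clean workaround, in the spirit of \cite{AChen14}, is to prove \eqref{lem1:eq1} first for $\alpha$ a step function, where $\Psi_{\alpha,\gamma}$ is classical via the Hopf-Cole transformation and depends smoothly on $\gamma$, and then to extend to general $\alpha\in\mathcal{M}$ by density, using continuity of both sides of \eqref{lem1:eq1} in $\alpha$ with respect to the $L^1$-norm. Alternatively, one can bypass the envelope step entirely by differentiating the Parisi PDE in $\gamma$ directly: the resulting linear backward PDE for $v = \partial_\gamma\Psi_{\alpha,\gamma}$ is solved by Feynman-Kac along the diffusion $X_{\alpha,\gamma}$, which delivers the same intermediate identity without appeal to the maximality of $u_{\alpha,\gamma}^x$.
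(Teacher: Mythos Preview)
Your proposal is correct and follows essentially the same approach as the paper: both start from the envelope formula obtained by differentiating the variational representation \eqref{lem1:proof:eq0} at the optimal $u_{\alpha,\gamma}^x$ (justified by restricting to step-function $\alpha$, as you note), then use the martingale identity \eqref{lem1:proof:eq1} and It\^o isometry to reach the intermediate expression, and finally perform the two integrations by parts in $s$---one on $\int_0^1\alpha\zeta\,\e(u_{\alpha,\gamma}^x)^2\,ds$ via $d\e(u_{\alpha,\gamma}^x)^2=\gamma\zeta\,\e(\partial_{xx}\Psi_{\alpha,\gamma})^2\,ds$ and one on $\int_0^1\zeta\,\e\partial_{xx}\Psi_{\alpha,\gamma}\,ds$ via \eqref{lem1:proof:eq2}---so that the $\gamma\int_0^1\alpha\xi'\zeta\,\e(\partial_{xx}\Psi_{\alpha,\gamma})^2\,ds$ terms cancel and the boundary contributions, together with $\xi'(0)=0$ and $\partial_{xx}\Psi_{\alpha,\gamma}(1,\cdot)=1-\tanh^2$, assemble into \eqref{lem1:eq1}. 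The only cosmetic difference is that the paper postpones the It\^o isometry step until after writing \eqref{lem1:proof:eq3}, whereas you apply it immediately to reach your intermediate identity; the substance is the same.
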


\begin{proof} 
	Since one may approximate the Parisi PDE solution $\Psi_{\alpha,\gamma}$ as well as the process $X_{\alpha,\gamma}$ by using the distribution functions induced by atomic probability measures (see the proof of \cite[Theorem 2]{AChen14}), we consider without loss of generality that $\alpha$ satisfies 
	\begin{align}
	\label{eq1}
	\mbox{$\alpha(s)=m_\ell$ for $s\in [q_\ell,q_{\ell+1})$ for $0\leq \ell \leq k$ and $\alpha(1)=1$,}
	\end{align} 
	where
	\begin{align*}
	0&=q_0\leq q_1\leq \cdots\leq q_{k}\leq q_{k+1}=1,\\
	0&=m_0\leq m_1\leq \cdots \leq m_{k-1}\leq m_{k}=1.
	\end{align*} 
	From \cite[Lemma 2]{C15}, we compute by using \eqref{lem1:proof:eq0} and then \eqref{lem1:proof:eq1} to get
	\begin{align}
	\begin{split}
	\notag
	\partial_\gamma \Psi_{\alpha,\gamma}(0,x)&=\e\Bigl[ u_{\alpha,\gamma}^x(1)\Bigl(\int_0^1\alpha\zeta u_{\alpha,\gamma}^xds+\frac{1}{2\sqrt{\gamma}}\int_0^1\zeta^{1/2}dW\Bigr)\Bigr]-\frac{1}{2}\int_0^1\alpha \zeta\e(u_{\alpha,\gamma}^x)^2ds
	\end{split}\\
	\begin{split}
	\label{lem1:proof:eq3}
	&=\frac{1}{2}\int_0^1\alpha \zeta\e (u_{\alpha,\gamma}^x)^2ds+\frac{1}{2\sqrt{\gamma}}\e u_{\alpha,\gamma}^x(1)\int_0^1\zeta^{1/2}dW.
	\end{split}
	\end{align}
	Using integration by parts and \eqref{lem1:proof:eq1}, the first term can be computed as
	\begin{align*}
	\int_0^1 \alpha\zeta\e (u_{\alpha,\gamma}^x)^2ds
	&=\sum_{\ell=0}^{k}m_\ell \int_{q_\ell}^{q_{\ell+1}}\zeta\e (u_{\alpha,\gamma}^x)^2ds\\
	&=\sum_{\ell=0}^{k}m_\ell\Bigl(\xi'\e( u_{\alpha,\gamma}^x)^2\big|_{q_\ell}^{q_{\ell+1}}-\int_{q_\ell}^{q_{\ell+1}}\xi'\frac{d}{ds}\e (u_{\alpha,\gamma}^x)^2ds\Bigr)\\
	&=\sum_{\ell=0}^{k}m_\ell \xi'\e (u_{\alpha,\gamma}^x)^2\big|_{q_\ell}^{q_{\ell+1}}-\gamma\int_0^1\alpha\xi'\zeta\e \partial_{xx}\Psi_{\alpha,\gamma}(s,X_{\alpha,\gamma})^2ds.
	\end{align*}
	As for the second term, we use \eqref{lem1:proof:eq1}, integration by parts, and then \eqref{lem1:proof:eq2},
	\begin{align*}
	\e u_{\alpha,\gamma}^x(1)\int_0^1\zeta^{1/2}dW
	&=\gamma^{1/2}\e \int_0^1 \zeta^{1/2}\partial_{xx}\Psi_{\alpha,\gamma}(s,X_{\alpha,\gamma})dW\cdot \int_0^1\zeta^{1/2}dW\\
	&=\gamma^{1/2}\e \int_0^1\zeta\e \partial_{xx}\Psi_{\alpha,\gamma}(s,X_{\alpha,\gamma})ds\\
	&=\gamma^{1/2}\Bigl(\xi'\e \partial_{xx}\Psi_{\alpha,\gamma}(s,X_{\alpha,\gamma})\big|_0^1-\int_0^1\xi'\frac{d}{ds}\e \partial_{xx}\Psi_{\alpha,\gamma}(s,X_{\alpha,\gamma})ds\Bigr)\\
	&=\gamma^{1/2}\Bigl(\xi'(1)(1-\e u_{\alpha,\gamma}^x(1)^2)+\gamma\int_0^1\alpha\xi'\zeta\e \partial_{xx}\Psi_{\alpha,\gamma}(s,X_{\alpha,\gamma})^2ds\Bigr),
	\end{align*}
	where the last equality used $\partial_{xx}\Psi_{\alpha,\gamma}(1,y)=1-\tanh^2(y)=1-\partial_x\Psi_{\alpha,\gamma}(1,y)^2$ and $\xi'(0)=0.$
	Putting these into \eqref{lem1:proof:eq3} yields
	\begin{align*}
	\partial_\gamma \Psi_{\alpha,\gamma}(0,x)&=\frac{1}{2}\Bigl(\sum_{\ell=0}^{k}m_\ell \xi'\e (u_{\alpha,\gamma}^x)^2\big|_{q_\ell}^{q_{\ell+1}}+\xi'(1)(1-\e u_{\alpha,\gamma}^x(1)^2)\Bigr),
	\end{align*}
	which implies \eqref{lem1:eq1} since
	\begin{align*}
	\sum_{\ell=0}^{k}m_\ell \xi'\e (u_{\alpha,\gamma}^x)^2\big|_{q_\ell}^{q_{\ell+1}}
	&=\sum_{\ell=0}^{k-1} m_\ell \xi'(q_{\ell+1})\e u_{\alpha,\gamma}^x(q_{\ell+1})^2-\sum_{\ell=0}^{k-1} m_{\ell+1} \xi'(q_{\ell+1})\e u_{\alpha,\gamma}^x(q_{\ell+1})^2\\
	&\quad +\xi'(1)\e u_{\alpha,\gamma}^x(1)^2-\xi'(0)\cdot \e u_{\alpha,\gamma}^x(0)^2\\
	&=-\int_0^1 \xi'\e (u_{\alpha,\gamma}^x)^2d\alpha+ \xi'(1)\e u_{\alpha,\gamma}^x(1)^2,
	\end{align*}
	where again we used $\xi'(0)=0$ in the second equality.
	
\end{proof}

\begin{remark}\rm
Applying \eqref{lem1:eq1} with $x=0$ and the identity
$$
\int_0^1\alpha(s) s\xi''(s)ds=\xi'(1)-\int_0^1\alpha(s)\xi'(s)ds-\int_0^1 s\xi'(s)d\alpha,
$$ 
one obtains
	\begin{align}\label{cor2:eq1}
	\partial_\gamma \hp({\alpha,\gamma})&=\frac{1}{2}\Bigl(\int_0^1\alpha(s)\xi'(s)ds-\int_0^1\xi'(s)(\e (u_{\alpha,\gamma}^0(s))^2-s)d\alpha\Bigr).
	\end{align}
In particular, if $\alpha$ is the Parisi measure $\alpha_{P,\sqrt{\gamma}}$, then from \cite[Proposition 1]{C15}, $$\e u_{\alpha_{P,\sqrt{\gamma}},\gamma}^0(s)^2=s$$ for any $s$ in the support of $\alpha$ and thus, \eqref{cor2:eq1} and \cite[Lemma 2]{C15} together yields
\begin{align}\label{rmk2:eq1}
\hp'(\gamma)=\partial_\gamma\hp(\alpha_{P,\sqrt{\gamma}},\gamma)&=\frac{1}{2}\int_0^1\alpha_{P,\sqrt{\gamma}}(s)\xi'(s)ds.
\end{align}
Note that a similar equation for $\mathcal{P}$ was also derived in Panchenko \cite{PD08}, where heavily using the property of $\alpha_{P,\beta}$ being the minimizer of the Parisi functional, he presented an elementary argument to obtain
\begin{align}\label{diff}
\mathcal{P}'(\beta)&=\beta\int_0^1\alpha_{P,\beta}(s)\xi'(s)ds.
\end{align}
In our situation, \eqref{cor2:eq1} holds for arbitrary $\alpha$ and the derivation is more delicate using the variational formula for the Parisi PDE. Another possible approach to justifying Proposition \ref{lem1} could be a direct computation via the Gaussian integration by parts formula. However, this leads to extensive computations and it seems unclear to the authors how to simplify the final expression into a simple formula as in Proposition \ref{lem1}.  
\end{remark}

Now by the virtue of \eqref{lem1:eq1}, it is clear that the concavity of $\Psi_{\alpha,\gamma}$ will follow if one could establish that $\e u_{\alpha,\gamma}^0(s)^2$ is nondecreasing in $\gamma$ for any $s\in[0,1].$ To this end, we shall perform a change of variables to express $\e u_{\alpha,\gamma}^0(s)^2$ in terms of Gaussian random variables as follows. Let $\alpha$ satisfy \eqref{eq1}. Set independent Gaussian random variables
\begin{align}
\begin{split}
\label{def1}
z_0,z_1,\ldots,z_k
\end{split}
\end{align} 
with mean zero and variance  $\e z_j^2=\xi'(q_{j+1})-\xi'(q_j)$ and define for any $0\leq a\leq b\leq k+1$, 
\begin{align}\label{def2}
\zeta_{a,b}=\sum_{j=a}^{b-1}z_j,\,\,\eta_{a,b}^x=\sqrt{\gamma}(x+\zeta_{a,b}).
\end{align}
Here, the case $a=b$ should be understood as $\zeta_{a,a}=0$ and $\eta_{a,a}^x=\sqrt{\gamma}x.$
\begin{proposition}\label{prop2}
	Assume that $\alpha$ satisfies \eqref{eq1}. Let $f$ be a bounded measurable function on $\mathbb{R}.$ 
	For $0\leq a\leq b\leq k+1$, we have that
	\begin{align}
	\begin{split}
	\label{lem10}
	&\e f(X_{a,b}^x(q_b))
	=\e f(\eta_{a,b}^x)\exp \sum_{\ell=a}^{b-1}m_\ell(\Psi_{\alpha,\gamma}(q_{\ell+1},\eta_{a,{\ell+1}}^x)-\Psi_{\alpha,\gamma}(q_\ell,\eta_{a,\ell}^x)),
	\end{split}
	\end{align}	
	where $X^x_{a,b}(r)$ for $q_a\leq r\leq q_{b}$ is the solution to the following SDE,
	$$
	dX_{a,b}^x=\gamma\alpha \zeta\Psi_{\alpha,\gamma}(r,X_{a,b}^x)dr+\gamma^{1/2}\zeta^{1/2}dW
	$$
	with $X_{a,b}^x(q_a)=x$.
\end{proposition}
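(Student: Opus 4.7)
The plan is to perform a Girsanov change of measure to compare the drifted SDE $X_{a,b}^x$ with a driftless Gaussian companion and then identify the Radon--Nikodym density as the exponential on the right-hand side of \eqref{lem10} by applying It\^o's formula together with the Parisi PDE on each subinterval $[q_\ell,q_{\ell+1}]$ where $\alpha\equiv m_\ell$ is constant.

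First, I would introduce the driftless process $Y$ solving $dY=\gamma^{1/2}\zeta^{1/2}dW$ with $Y(q_a)=x$. Its Gaussian increments give $(Y(q_\ell))_{a\le\ell\le b}\stackrel{d}{=}(\eta_{a,\ell}^x)_{a\le\ell\le b}$ directly from the definition \eqref{def1}--\eqref{def2} of the $z_j$'s and the $\eta$'s. Since $|\partial_x\Psi_{\alpha,\gamma}|\le 1$ (inherited from $|\partial_x\log\cosh|\le 1$ and preserved by the Parisi PDE), Novikov's condition is trivially satisfied, and Girsanov's theorem yields
$$\e f(X_{a,b}^x(q_b))=\e\bigl[f(Y(q_b))\,D(q_b)\bigr],$$
with
$$D(q_b)=\exp\Bigl(\int_{q_a}^{q_b}\gamma^{1/2}\alpha\zeta^{1/2}\partial_x\Psi_{\alpha,\gamma}(r,Y)\,dW-\frac{1}{2}\int_{q_a}^{q_b}\gamma\alpha^2\zeta\bigl(\partial_x\Psi_{\alpha,\gamma}(r,Y)\bigr)^2\,dr\Bigr).$$
The next step is to compute $\log D(q_b)$ by summing contributions on each interval $[q_\ell,q_{\ell+1}]$. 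There $\alpha\equiv m_\ell$ and the Parisi PDE takes the form $\partial_t\Psi_{\alpha,\gamma}=-\tfrac{\gamma\zeta}{2}\bigl(\partial_{xx}\Psi_{\alpha,\gamma}+m_\ell(\partial_x\Psi_{\alpha,\gamma})^2\bigr)$. Applying It\^o's formula to $r\mapsto\Psi_{\alpha,\gamma}(r,Y(r))$, the $\partial_{xx}\Psi$ contribution from the quadratic variation of $Y$ cancels against the PDE, leaving
$$\Psi_{\alpha,\gamma}(q_{\ell+1},Y(q_{\ell+1}))-\Psi_{\alpha,\gamma}(q_\ell,Y(q_\ell))=\int_{q_\ell}^{q_{\ell+1}}\gamma^{1/2}\zeta^{1/2}\partial_x\Psi_{\alpha,\gamma}\,dW-\frac{m_\ell}{2}\int_{q_\ell}^{q_{\ell+1}}\gamma\zeta\bigl(\partial_x\Psi_{\alpha,\gamma}\bigr)^2\,dr.$$
Multiplying by $m_\ell$ and comparing against the corresponding piece of $\log D$ shows that the two agree on $[q_\ell,q_{\ell+1}]$, and summing over $\ell=a,\ldots,b-1$ telescopes into exactly the exponent appearing in \eqref{lem10}. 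Substituting $(Y(q_\ell))_\ell$ by $(\eta_{a,\ell}^x)_\ell$ through their joint distributional equality concludes the proof.

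The only obstacles are the standard technicalities of stochastic calculus: justification of Girsanov via Novikov, and the regularity needed for It\^o's formula. Both are immediate from the uniform bound $|\partial_x\Psi_{\alpha,\gamma}|\le 1$ and from the fact that under \eqref{eq1} the solution $\Psi_{\alpha,\gamma}$ is a genuine classical $C^{1,2}$ solution of the Parisi PDE on each subinterval (only $\alpha$, not $\Psi_{\alpha,\gamma}$, is discontinuous across the grid points $q_\ell$), so that all manipulations above are rigorous.
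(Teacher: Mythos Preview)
Your argument is correct and rests on the same two ingredients as the paper's: a Girsanov change of measure to remove the drift, and It\^o's formula combined with the Parisi PDE to identify the Radon--Nikodym density with the telescoping exponential in \eqref{lem10}. The only difference is organizational: the paper runs a downward induction on $a$, applying Girsanov on a single interval $[q_{a-1},q_a]$ at each step, whereas you apply Girsanov once over the full interval $[q_a,q_b]$ and then decompose the exponent interval by interval afterwards. Your one-shot version is arguably the more transparent of the two.

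One small caveat: your distributional identity $(Y(q_\ell))_{\ell}\stackrel{d}{=}(\eta_{a,\ell}^x)_\ell$ is valid only for $x=0$, since $Y(q_a)=x$ whereas $\eta_{a,a}^x=\sqrt{\gamma}\,x$. This is not a flaw in your strategy but reflects a scaling slip already present in the statement of the proposition (for $a=b$ the two sides read $f(x)$ and $f(\sqrt{\gamma}x)$). The result is only ever invoked with $x=0$ (Corollary~\ref{cor1} and the proof of Theorem~\ref{thm2}), where the discrepancy disappears; for general $x$ your Girsanov computation delivers the formula with $x+\sqrt{\gamma}\,\zeta_{a,\ell}$ in place of $\eta_{a,\ell}^x$.
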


\begin{proof}
	Obviously if $a=b$, the announced equation holds. Assume that \eqref{lem10} is true for some $1\leq a\leq b.$ Let $u$ be an arbitrary progressively measurable process on $[q_a,q_b]$. Set
	$$
	Z=\exp\Bigl(-\frac{\gamma}{2}\int_{q_{a-1}}^{q_a}m_{a-1}^2\zeta u^2dr-\gamma^{1/2}\int_{q_{a-1}}^{q_{a}}m_{a-1}\zeta^{1/2}udW\Bigr).
	$$
	Define $d\tilde{\mathbb{P}}=Zd\mathbb{P}$ and $$\tilde{W}(s)=\gamma^{1/2}\int_{q_{a-1}}^{s}m_{a-1}\zeta^{1/2}udr+W(s).$$ Then the Girsanov theorem says that $\tilde{W}$ is a Brownian motion under $\tilde{\mathbb P}$. Denote
	\begin{align*}
	Y(t)&=x+\gamma^{1/2}\int_{q_{a-1}}^{t}\zeta^{1/2}dW.
	\end{align*}
	Let $(\tilde{Y}(t))_{q_{a-1}\leq t\leq q_a}$ be the solution to 
	\begin{align*}
	\tilde{Y}(t)&=x+\gamma\int_{q_{a-1}}^{t}m_{a-1}\zeta \partial_x\Psi_{\alpha,\eta}(r,\tilde{Y})dr+\gamma^{1/2}\int_{q_{a-1}}^{t}\zeta^{1/2}dW
	\end{align*}
	with $\tilde{Y}(q_{a-1})=x.$
	We now take $u(r)=\partial_x\Psi_{\alpha,\gamma}(r,\tilde{Y})$ and express
	\begin{align*}
	\tilde{Y}(t)&=x+\gamma^{1/2}\int_{q_{a-1}}^t\zeta^{1/2}d\tilde{W}.
	\end{align*}
	Then the induction hypothesis gives
	\begin{align}
	\begin{split}\notag
	&\e f(\eta_{a-1,b}^x)\exp \sum_{\ell=a-1}^{b-1}m_\ell(\Psi_{\alpha,\gamma}\bigl(q_{\ell+1},\eta_{a-1,{\ell+1}}^x)-\Psi_{\alpha,\gamma}(q_\ell,\eta_{a-1,\ell}^x)\bigr)\\ 
	&=\e f\bigl(X_{a,b}^{Y(q_a)}(q_b)\bigr)\exp m_\ell(\Psi_{\alpha,\gamma}\bigl(q_{a},Y(q_a))-\Psi_{\alpha,\gamma}(q_{a-1},x)\bigr)\\
	&=\tilde\e f\bigl(X_{a,b}^{\tilde Y(q_a)}(q_b)\bigr)\exp m_\ell(\Psi_{\alpha,\gamma}\bigl(q_{a},{\tilde Y}(q_a))-\Psi_{\alpha,\gamma}(q_{a-1},x)\bigr)\\
	&=\e f\bigl(X_{a,b}^{\tilde Y(q_a)}(q_b)\bigr)\exp m_\ell(\Psi_{\alpha,\gamma}\bigl(q_{a},{\tilde Y}(q_a))-\Psi_{\alpha,\gamma}(q_{a-1},x)\bigr)
	\end{split}\\
	\begin{split}\label{lem10:proof:eq1}
	&\quad\cdot \exp\Bigl(-\frac{\gamma}{2}\int_{q_{a-1}}^{q_a}m_{a-1}^2\zeta u^2dr-\gamma^{1/2}\int_{q_{a-1}}^{q_{a}}m_{a-1}\zeta^{1/2}udW\Bigr).
	\end{split}
	\end{align}
	Here clearly $X_{a,b}^{\tilde{Y}(q_{a})}(q_b)=X_{a-1,b}^x(q_b).$ On the other hand, from It\^{o}'s formula and \eqref{pde},
	\begin{align*}
	&\Psi_{\alpha,\gamma}\bigl(q_{a},{\tilde Y}(q_a))-\Psi_{\alpha,\gamma}(q_{a-1},x)\\
	&=\int_{q_{a-1}}^{q_a}\partial_t\Psi_{\alpha,\gamma}(t,\tilde{Y})dt+\int_{q_{a-1}}^{q_a}\partial_x\Psi_{\alpha,\gamma}(t,\tilde{Y})d\tilde{Y}+\frac{1}{2}\int_{q_{a-1}}^{q_a}
	\partial_{xx}\Psi_{\alpha,\gamma}(t,\tilde{Y})d\left<\tilde{Y}\right>\\
	&=\int_{q_{a-1}}^{q_a}\partial_t\Psi_{\alpha,\gamma}(t,\tilde{Y})dt+\int_{q_{a-1}}^{q_a}u(\gamma m_{a-1}\zeta udr+\gamma^{1/2}\zeta^{1/2}dW)
	+\frac{\gamma}{2}\int_{q_{a-1}}^{q_a}\zeta
	\partial_{xx}\Psi_{\alpha,\gamma}(t,\tilde{Y})dt\\
	&=\frac{\gamma m_{a-1}}{2}\int_{q_{a-1}}^{q_a}\zeta u^2dr+\gamma^{1/2}\int_{q_{a-1}}^{q_a}\zeta^{1/2}udW,
	\end{align*}
	where the last equality used \eqref{pde}. These and \eqref{lem10:proof:eq1} conclude the announced result for the case when $a$ is replaced by $a-1$.
\end{proof}

Letting $f(y):=\partial_x\Psi_{\alpha,\gamma}(q_b,y)^2$ and $a=0,$ Proposition \ref{prop2} reads

\begin{corollary}\label{cor1}
	For $0\leq b\leq k+1$,
	\begin{align*}
	&\e u_{\alpha,\gamma}^x(q_b)^2
	=\e \partial_x\Psi_{\alpha,\gamma}(q_b,\eta_{0,b}^x)^2\exp \sum_{\ell=0}^{b-1}m_\ell(\Psi_{\alpha,\gamma}(q_{\ell+1},\eta_{0,{\ell+1}}^x)-\Psi_{\alpha,\gamma}(q_\ell,\eta_{0,\ell}^x)).
	\end{align*}		
\end{corollary}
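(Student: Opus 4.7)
The plan is to recognize Corollary \ref{cor1} as a direct specialization of Proposition \ref{prop2}, so the entire argument reduces to matching notation and verifying the hypotheses.

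First, I would observe that on the interval $[q_0,q_b]=[0,q_b]$, the process $X_{\alpha,\gamma}$ used to define $u_{\alpha,\gamma}^x$ coincides with the process $X_{0,b}^x$ from Proposition \ref{prop2}: both satisfy the same SDE
$$
dY = \gamma\alpha\zeta\,\partial_x\Psi_{\alpha,\gamma}(r,Y)\,dr + \gamma^{1/2}\zeta^{1/2}\,dW
$$
with the same initial condition $Y(0)=x$, and the coefficient $\alpha$ is the same step function. By pathwise uniqueness of strong solutions, they agree, and in particular
$$
u_{\alpha,\gamma}^x(q_b) \;=\; \partial_x\Psi_{\alpha,\gamma}\bigl(q_b, X_{0,b}^x(q_b)\bigr).
$$

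Second, I would set $f(y):=\partial_x\Psi_{\alpha,\gamma}(q_b,y)^2$. This $f$ is bounded: the terminal condition $\Psi_{\alpha,\gamma}(1,x)=\log\cosh x$ has derivative $\tanh$ in $[-1,1]$, and a standard Cole--Hopf / maximum-principle argument applied to the Parisi PDE \eqref{pde} shows that $|\partial_x\Psi_{\alpha,\gamma}(t,\cdot)|\leq 1$ for every $t\in[0,1]$, so $0\leq f\leq 1$. Hence the hypothesis of Proposition \ref{prop2} is met, and
$$
\e\, u_{\alpha,\gamma}^x(q_b)^2 \;=\; \e\, f\!\bigl(X_{0,b}^x(q_b)\bigr).
$$

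Third, I would invoke Proposition \ref{prop2} with $a=0$ and this choice of $f$. Its conclusion, namely
$$
\e f(X_{0,b}^x(q_b)) = \e f(\eta_{0,b}^x)\exp\sum_{\ell=0}^{b-1} m_\ell\bigl(\Psi_{\alpha,\gamma}(q_{\ell+1},\eta_{0,\ell+1}^x)-\Psi_{\alpha,\gamma}(q_\ell,\eta_{0,\ell}^x)\bigr),
$$
is exactly the identity claimed in Corollary \ref{cor1} once we unfold $f$. There is no real obstacle here: the corollary is a notational rephrasing of the previous proposition, with the only substantive point being the (standard) boundedness of $\partial_x\Psi_{\alpha,\gamma}$. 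The genuine work sits in Proposition \ref{prop2}, whose Girsanov/It\^o induction on the layers $[q_{a-1},q_a]$ is where the change of measure from the drifted diffusion $X_{a,b}^x$ to the Gaussian sum $\eta_{a,b}^x$ is accomplished.
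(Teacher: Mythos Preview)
Your proposal is correct and follows exactly the paper's approach: the paper states the corollary in one line as the specialization of Proposition~\ref{prop2} with $f(y)=\partial_x\Psi_{\alpha,\gamma}(q_b,y)^2$ and $a=0$. You simply make explicit the two points the paper leaves implicit, namely that $X_{\alpha,\gamma}$ and $X_{0,b}^x$ coincide on $[0,q_b]$ and that $f$ is bounded via $|\partial_x\Psi_{\alpha,\gamma}|\leq 1$.
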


\section{Some Auxiliary Lemmas}
This section is a preparation for establishing the monotonicity of $\e u_{\alpha,\gamma}^0(s)^2$ in $\gamma$ for any $s\in[0,1],$ which is consisted of three lemmas. The first gathers a few properties about the expectations for functions of Gaussian random variables as well as two covariance inequalities, one is a special case of the FKG inequality, while the other is taken from \cite{P08}. 

\begin{lemma}\label{lem2} 
	Suppose that $f,f_1,f_2$ are real-valued functions on $\mathbb{R}$ and $z$ is a centered Gaussian random varaible with $\e z^2=c^2.$ 
	\begin{itemize}
		\item[$(i)$] If $f_1,f_2$ are odd with $f_1\leq f_2$ on $[0,\infty)$ then $\e f_1(x+z)\leq \e f_2(x+z)$ for all $x\geq 0.$
		\item[$(ii)$] Let $D$ be a nonnegative function on $\mathbb{R}$ with $\e D(x+z)=1$ for some $x\geq 0.$ 		
		If either
	\begin{align}
	\label{lem2:cond0}
	\mbox{$f_1,f_2$ are even and nondecreasing on $[0,\infty)$}
	\end{align}
	or
		\begin{align}
		\label{lem2:cond1}
		\left\{
		\begin{array}{l}
		\mbox{$f_1$ is even and $f_2$ is odd},\\
		\mbox{$f_1,f_2$ are nondecreasing on $[0,\infty)$},\\
		\mbox{$D$ is even,}
		\end{array}
		\right.
		\end{align}
		then we have
		\begin{align}
		\begin{split}\label{lem2:eq1}
		&\e f_1(x+z)f_2(x+z)D(x+z)\geq \e f_1(x+z)D(x+z)\e f_2(x+z)D(x+z).
		\end{split}
		\end{align}
	
	\end{itemize}
\end{lemma}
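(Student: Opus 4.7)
Set $g := f_2 - f_1$, an odd function that is nonnegative on $[0,\infty)$, and reduce to showing $\e g(x+z) \geq 0$ for $x \geq 0$. Writing $\phi$ for the density of $z$ and substituting $u = x+y$,
\[
\e g(x+z) = \int_{\mathbb{R}} g(u)\phi(u-x)\,du.
\]
I would split the integral at $u=0$, change variable $u \mapsto -u$ in the negative half, and use oddness of $g$ together with evenness of $\phi$ to obtain
\[
\e g(x+z) = \int_0^{\infty} g(u)\bigl[\phi(u-x) - \phi(u+x)\bigr]\,du.
\]
Both factors are nonnegative for $u,x \geq 0$ (since $|u-x| \leq u+x$ and $\phi$ is decreasing in $|\cdot|$), which gives (i).

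\textbf{Plan for part (ii).} The normalization $\e D(x+z) = 1$ says that $d\mu(y) := D(y)\phi(y-x)\,dy$ is a probability measure on $\mathbb{R}$, so \eqref{lem2:eq1} is the covariance inequality $\int f_1 f_2\,d\mu \geq \int f_1\,d\mu \cdot \int f_2\,d\mu$. Under condition \eqref{lem2:cond0}, I would write $f_j(y) = \widetilde{f}_j(|y|)$ with $\widetilde{f}_j$ nondecreasing, push forward under $Y := |X|$ where $X \sim \mu$, and invoke the one-dimensional Chebyshev correlation inequality for two comonotone functions on $[0,\infty)$ against the law of $Y$.

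The case \eqref{lem2:cond1} is the more delicate one. The plan is to decompose $f_2(y) = \mathrm{sign}(y)\widetilde{f}_2(|y|)$, with $\widetilde{f}_2$ nondecreasing and nonnegative on $[0,\infty)$ (using $f_2(0) = 0$), and then condition on $Y = |X|$ under $\mu$. Evenness of $D$ together with a direct Gaussian ratio computation yields
\[
q(y) := \int \mathrm{sign}(t)\,\mu(dt \mid Y = y) = \frac{\phi(y-x) - \phi(y+x)}{\phi(y-x) + \phi(y+x)} = \tanh\!\left(\frac{xy}{c^2}\right),
\]
which is nonnegative and nondecreasing in $y \geq 0$. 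After conditioning, the required inequality becomes
\[
\int \widetilde{f}_1 \cdot \widetilde{f}_2\, q\, d\nu \;\geq\; \int \widetilde{f}_1\, d\nu \cdot \int \widetilde{f}_2\, q\, d\nu,
\]
where $\nu$ is the law of $Y$; since $\widetilde{f}_1$ and $\widetilde{f}_2\, q$ are both nondecreasing on $[0,\infty)$ (the latter as a product of two nonnegative nondecreasing functions), this is again the 1D Chebyshev inequality.

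\textbf{Main obstacle.} I expect the subtle step to be the reduction for \eqref{lem2:cond1}: a direct two-copy/FKG argument does not apply because the sign of $f_2(X_1) - f_2(X_2)$ is not determined by $|X_1|,|X_2|$. What makes things work is precisely the evenness of $D$, which makes the conditional sign $q(y)$ depend only on $|X|$, produces the clean $\tanh$ formula, and crucially makes it monotone, so that it can be absorbed into $\widetilde{f}_2$ as a single monotone factor and the 1D correlation inequality can be applied.
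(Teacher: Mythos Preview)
Your argument is correct. Part~(i) and the case \eqref{lem2:cond0} of~(ii) match the paper's proof almost verbatim: the paper also splits the Gaussian integral at $0$ to get a $\sinh$ factor (equivalently your $\phi(u-x)-\phi(u+x)$), and for \eqref{lem2:cond0} it writes out the two-copy symmetrization $\e D(z_x)D(z_x')(f_1(z_x)-f_1(z_x'))(f_2(z_x)-f_2(z_x'))\mathbf{1}_{\{z\ge z'\}}$, which is exactly the standard proof of the one-dimensional Chebyshev correlation inequality you invoke.

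For \eqref{lem2:cond1} the two arguments diverge. The paper stays with the two-copy integral over $\{s\ge t\}$, splits it into $\Omega_1=\{s\ge t,\ |s|\ge|t|\}$ and $\Omega_2=\{s\ge t,\ |s|<|t|\}$, and applies the reflection $(u,v)=(-t,-s)$ on $\Omega_2$; the parities of $f_1,f_2,D$ turn the $\Omega_2$ piece into minus an $\Omega_1$ integral with the shifted Gaussian weight $e^{-((s+x)^2+(t+x)^2)/2c^2}$, and on $\Omega_1$ both the kernel $K$ and the difference of Gaussian weights are nonnegative because $s+t\ge 0$ there. Your route---condition on $|X|$, compute the conditional sign $q(y)=\tanh(xy/c^2)$, absorb it into $\widetilde f_2$, and apply Chebyshev once more---is a cleaner probabilistic repackaging of the same symmetry: it makes transparent \emph{why} evenness of $D$ is needed (so that $q$ depends only on $|X|$) and \emph{why} the Gaussian kernel matters (so that $q$ is monotone). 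The paper's version is slightly more self-contained, needing no conditional expectations, but the reflection trick is less illuminating about the mechanism. Either argument is complete.
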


\begin{proof} Note that 
	\begin{align*}
	\e f_l(x+z)&=\int_{-\infty}^\infty f_l(u)\rho(u,x)\exp\bigl(\frac{ux}{c^2}\bigr)du,
	\end{align*}
	where $\rho(u,x)=(2\pi c^2)^{-1/2}\exp(-(u^2+x^2)/2c^2).$ If we first split this integral into two parts $[0,\infty)$ and $(-\infty,0]$ and then using change of variables $v=-u$ and the assumption that $f_l$ is odd, it follows that  
	\begin{align*}
	\e f_l(x+z)&=\int_{0}^\infty f_l(u)\rho(u,x)\exp\bigl(\frac{ux}{c^2}\bigr)du-\int_{0}^{\infty} f_l(v)\rho(v,x)\exp\bigl(-\frac{vx}{c^2}\bigr)dv\\
	&=2\int_0^\infty f_l(u)\rho(u,x)\sinh\bigl(\frac{ux}{c^2}\bigr)du.
	\end{align*}
	Since $\sinh(ux)\geq 0$ for $x,u\geq 0$ and $f_1\leq f_2,$ this equation gives $(i).$
	
	\smallskip
	
	As for $(ii),$ let $z'$ be an independent copy of $z.$ Denote $z_x=x+z$ and $z_x'=x+z'.$ Using $\e D(z_x)=\e D(z_x')=1,$ we write
	\begin{align*}
	&\e f_1(z_x)f_2(z_x)D(z_x)
	-\e f_1(z_x)D(z_x)\e f_2(z_x)D(z_x)\\
	&=\e D(z_x)D(z_x')(f_1(z_x)-f_1(z_x'))(f_2(z_x)-f_2(z_x'))I(z\geq z').
	\end{align*}
	Applying change of variables $(s,t)=(z_x,z_x'),$ this integral equals
	\begin{align}\label{lem2:proof:eq2}
	\int_{\{s\geq t\}} K(s,t)\exp\biggl(-\frac{1}{2c^2}((s-x)^2+(t-x)^2)\biggr)dsdt,
	\end{align}
	where 
	$$
	K(s,t):=\frac{1}{2\pi c^2} D(s)D(t)(f_1(s)-f_1(t))(f_2(s)-f_2(t)).
	$$
	Assume that \eqref{lem2:cond0} holds. Clearly,
	$$
	(f_1(s)-f_1(t))(f_2(s)-f_2(t))=(f_1(|s|)-f_1(|t|))(f_2(|s|)-f_2(|t|))\geq0
	$$
	for all $s,t\in\mathbb{R}$ since $f_1,f_2$ are even and are nondecreasing on $[0,\infty).$ Thus, \eqref{lem2:eq1} holds. To prove \eqref{lem2:eq1} under the assumption \eqref{lem2:cond1}, let us split the integral region of \eqref{lem2:proof:eq2} into two parts $\Omega_1=\{(s,t):s\geq t,|s|\geq |t|\}$ and $\Omega_2=\{(s,t):s\geq t,|s|< |t|\}.$ Using change of variables $(u,v)=(-t,-s)$ and the assumptions that $f_1$ is even, $f_2$ is odd and $D$ is even, we obtain
	\begin{align*}
	&\int_{\Omega_2} K(s,t)\exp\biggl(-\frac{1}{2c^2}((s-x)^2+(t-x)^2)\biggr)dsdt\\
	&=-\int_{\Omega_1} K(u,v)\exp\biggl(-\frac{1}{2c^2}((u+x)^2+(v+x)^2)\biggr)dudv
	\end{align*}
	and thus, \eqref{lem2:proof:eq2} becomes $\int_{\Omega_1}K(s,t)L(x,s,t)dsdt,$
	where 
	\begin{align*}
	L(x,s,t)&:=\exp\biggl(-\frac{1}{2c^2}((s-x)^2+(t-x)^2)\biggr)-\exp\biggl(-\frac{1}{2c^2}((s+x)^2+(t+x)^2)\biggr).
	\end{align*}
	Note that from \eqref{lem2:cond1}, it implies $K\geq 0$ on $\Omega_1.$ Also since $x\geq 0$ and $s+t\geq 0$ on $\Omega_1,$ this gives
	$
	(s+x)^2+(t+x)^2-(s-x)^2-(t-x)^2= 4x(s+t)\geq 0
	$
	and thus, $L\geq 0$ on $\Omega_1$. These imply \eqref{lem2:eq1}.
\end{proof}

The second one is a comparison lemma.

\begin{lemma}\label{lem3}
	Let $0\leq \gamma_1\leq \gamma_2$ and $m>0.$ Suppose that $A_1,A_2$ are even convex with $$A_1'(\sqrt{\gamma_1}x)\leq A_2'(\sqrt{\gamma_2}x),\,\,\forall x\geq 0.$$ Define 
	\begin{align*}
	B_1(x)&:=\frac{1}{m}\log \e \exp mA_1(x+\sqrt{\gamma_1}z),\\
	B_2(x)&:=\frac{1}{m}\log \e\exp mA_2(x+\sqrt{\gamma_2}z)
	\end{align*}
	for some centered Gaussian random variable $z$. 
	Then
	\begin{align}
	\begin{split}\label{lem3:eq2}
	B_1'(\sqrt{\gamma_1}x)&\leq B_2'(\sqrt{\gamma_2}x),\,\,\forall x\geq 0.
	\end{split}
	\end{align} 
	Moreover, if $C_1,C_2$ are even with $$C_1(\sqrt{\gamma_1}x)\leq C_2(\sqrt{\gamma_2}x),\,\,\forall x\geq 0$$
	and $
	C_1',C_2'\geq 0$ on $[0,\infty),$
	then
	\begin{align}
	\begin{split}\label{lem3:eq1}
	&\e C_1(\sqrt{\gamma_1}(x+z))\exp m\bigl(A_1(\sqrt{\gamma_1}(x+z))-B_1(\sqrt{\gamma_1}x)\bigr)\\
	&\leq \e C_2(\sqrt{\gamma_2}(x+z))\exp m\bigl(A_2(\sqrt{\gamma_2}(x+z))-B_2(\sqrt{\gamma_2}x)\bigr),\,\,\forall x\in\mathbb{R}.
	\end{split}
	\end{align}

\end{lemma}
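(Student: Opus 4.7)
The plan is to prove both \eqref{lem3:eq2} and \eqref{lem3:eq1} by a single interpolation in an auxiliary parameter $t\in[0,1]$. By the evenness in $x$ of every expectation appearing in these statements, I may restrict attention to $x\ge 0$. For $t\in[0,1]$, introduce
\[
V_t(y) := (1-t)A_1(\sqrt{\gamma_1}\,y) + t\,A_2(\sqrt{\gamma_2}\,y), \qquad \tilde B_t(x) := \tfrac{1}{m}\log\e\exp mV_t(x+z),
\]
together with the weight $\tilde D_t := \exp m(V_t(x+z)-\tilde B_t(x))$. Being a convex combination of two even convex functions, $V_t$ is itself even and convex; consequently $\tilde D_t$ is even in $x+z$ and $\e\tilde D_t = 1$. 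At $t=0$ and $t=1$, the weight $\tilde D_t$ reduces to exactly the exponential weights appearing on the two sides of the inequalities.

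For \eqref{lem3:eq2}, set $U_t(y) := (1-t)A_1'(\sqrt{\gamma_1}y) + tA_2'(\sqrt{\gamma_2}y)$ and $K(t) := \e[U_t(x+z)\tilde D_t]$. Differentiating $B_i$ under the integral and evaluating at $\sqrt{\gamma_i}x$ gives $K(0) = B_1'(\sqrt{\gamma_1}x)$ and $K(1) = B_2'(\sqrt{\gamma_2}x)$, so it suffices to show $K'(t)\ge 0$. A direct computation yields
\[
K'(t) = \e[(U_2-U_1)(x+z)\tilde D_t] + m\bigl(\e[U_t(V_2-V_1)(x+z)\tilde D_t] - \e[U_t(x+z)\tilde D_t]\,\e[(V_2-V_1)(x+z)\tilde D_t]\bigr).
\]
The first term is nonnegative by the tilted analog of Lemma \ref{lem2}(i): the Gaussian symmetrization $\int f\,e^{mV_t}\phi(u-x)\,du = 2e^{-x^2/2c^2}\int_0^\infty f(u)e^{mV_t(u)}\phi(u)\sinh(ux/c^2)\,du$ carries over verbatim because $e^{mV_t(\cdot)}$ is even, and $U_2-U_1$ is odd with $U_2-U_1 \ge 0$ on $[0,\infty)$ by hypothesis. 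For the covariance term, I claim $V_2-V_1$ is even and nondecreasing on $[0,\infty)$. Evenness is immediate, and monotonicity follows from $V_i'(y) = \sqrt{\gamma_i}\,A_i'(\sqrt{\gamma_i}y)$ combined with $A_1'(\sqrt{\gamma_1}y)\le A_2'(\sqrt{\gamma_2}y)$, $\sqrt{\gamma_1}\le\sqrt{\gamma_2}$, and $A_i'(\sqrt{\gamma_i}y)\ge 0$ for $y\ge 0$ (which is itself a consequence of $A_i$ being even and convex). Since $U_t$ is odd and nondecreasing on $[0,\infty)$ and $\tilde D_t$ is even, Lemma \ref{lem2}(ii) under condition \eqref{lem2:cond1} shows the covariance is nonnegative.

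The proof of \eqref{lem3:eq1} proceeds identically but with $U_t$ replaced by $W_t(y) := (1-t)C_1(\sqrt{\gamma_1}y) + tC_2(\sqrt{\gamma_2}y)$ and $L(t) := \e[W_t(x+z)\tilde D_t]$, so that $L(0)$ and $L(1)$ give the two sides of \eqref{lem3:eq1}. The resulting formula for $L'(t)$ has a first term that is nonnegative simply because $W_1\le W_2$ pointwise on $\mathbb{R}$ -- the hypothesis $C_1(\sqrt{\gamma_1}y)\le C_2(\sqrt{\gamma_2}y)$ on $[0,\infty)$ extends to all of $\mathbb{R}$ by the evenness of $C_1,C_2$. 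Its covariance term is nonnegative by Lemma \ref{lem2}(ii) under condition \eqref{lem2:cond0}, since $W_t$ and $V_2-V_1$ are both even and nondecreasing on $[0,\infty)$. The main subtlety in this strategy, and the one place where both halves of the hypothesis ($A_i'$ comparison \emph{and} $\gamma_1\le\gamma_2$) are used in tandem, is the monotonicity of $V_2-V_1$ on $[0,\infty)$; everything else is an application of the Lemma \ref{lem2} machinery to the interpolated objects.
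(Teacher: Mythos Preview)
Your proof is correct and follows essentially the same route as the paper: interpolate in the tilted weight $\exp m((1-t)A_1(\sqrt{\gamma_1}\cdot)+tA_2(\sqrt{\gamma_2}\cdot))$, use the key monotonicity of $y\mapsto A_2(\sqrt{\gamma_2}y)-A_1(\sqrt{\gamma_1}y)$ on $[0,\infty)$ (obtained exactly as you do, splitting $(\sqrt{\gamma_2}-\sqrt{\gamma_1})A_2'+\sqrt{\gamma_1}(A_2'-A_1')$), and apply Lemma~\ref{lem2} under conditions \eqref{lem2:cond1} and \eqref{lem2:cond0}. The only cosmetic difference is that the paper keeps the observable fixed at $A_1'$ (resp.\ $C_1$) during the interpolation and swaps to $A_2'$ (resp.\ $C_2$) as a separate final step via Lemma~\ref{lem2}(i), whereas you interpolate the observable and the weight simultaneously, which folds that swap into the extra drift term $\e[(U_2-U_1)\tilde D_t]$ (resp.\ $\e[(W_2-W_1)\tilde D_t]$) in $K'(t)$ (resp.\ $L'(t)$).
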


\begin{proof} Denote $z_1=\sqrt{\gamma_1}(x+z)$ and $z_2=\sqrt{\gamma_2}(x+z).$ For \eqref{lem3:eq2}, we consider
	$$
	{\rho}(\lambda):=\frac{\e A_1'(z_1)S_\lambda}{\e {S}_\lambda},\,\,\forall \lambda\in[0,1],
	$$
	where $S_\lambda=\exp({m((1-\lambda)A_1(z_1)+\lambda A_2(z_2))}).$
	Then
	\begin{align*}
	\rho'(\lambda)&=m\frac{\e A_1'(z_1)(A_2(z_2)-A_1(z_1))S_\lambda\cdot \e S_\lambda- \e A_1'(z_1)S_\lambda \cdot\e (A_2(z_2)-A_1(z_1))S_\lambda}{(\e S_\lambda)^2}.
	\end{align*}
	Let $f_1(y)=A_2(\sqrt{\gamma_2}y)-A_1(\sqrt{\gamma_1}y)$ and $f_2(y)=A_1'(\sqrt{\gamma_1}y)$. Using the even convexity of $A_1,A_2$  and $A_2'(\sqrt{\gamma_2}y)\geq A_1'(\sqrt{\gamma_1}y)$ for $y\geq 0,$ one sees that $f_1$ is even and
	\begin{align*}
	f_1'(y)&=\sqrt{\gamma_2}A_2'(\sqrt{\gamma_2}y)-\sqrt{\gamma_1}A_1'(\sqrt{\gamma_1}y)\\
	&=(\sqrt{\gamma_2}-\sqrt{\gamma_1})A_2'(\sqrt{\gamma_2}y)+\sqrt{\gamma_1}(A_2'(\sqrt{\gamma_2}y)-A_1'(\sqrt{\gamma_1}y))\geq 0,\,\,\forall y\geq 0,
	\end{align*}
	and that $f_2$ is odd with nonnegative derivative on $[0,\infty).$ In addition, note that $$
	D_\lambda(y):=\frac{\exp m((1-\lambda)A_1(\sqrt{\gamma_1}y)+\lambda A_2(\sqrt{\gamma_2}y))}{\e S_\lambda}
	$$ is even and $\e D_\lambda(x+z)=1.$ Consequently, plugging $f_1, f_2$ and $D_\lambda$ into \eqref{lem2:cond1} and \eqref{lem2:eq1} leads to
	${\rho}'\geq 0.$ So
	\begin{align}
	\begin{split}\notag
	B_1'(\sqrt{\gamma_1}x)&=\e A_1'(z_1)\exp m\bigl(A_1(z_1)-B_1(\sqrt{\gamma_1}x)\bigr)\\
	&={\rho}(0)\\
	&\leq \rho(1)
	\end{split}\\
	\begin{split}
	\label{lem2:proof:eq1}
	&=\e A_1'(z_1)\exp m\bigl(A_2(z_2)-B_2(\sqrt{\gamma_2}x)\bigr).
	\end{split}
	\end{align} 
	Finally, since $A_1'(\sqrt{\gamma_1}y)D_1(y)\leq A_2'(\sqrt{\gamma_2}y)D_1(y)$ for all $y\geq 0$ and the two sides of this inequality are odd functions, we use Lemma \ref{lem2} $(i)$ to get
	\begin{align*}
	\e A_1'(z_1)\exp m\bigl(A_2(z_2)-B_2(\sqrt{\gamma_2}x)\bigr)
	&\leq \e A_2'(z_2)\exp m\bigl(A_2(z_2)-B_2(\sqrt{\gamma_2}x)\bigr)\\
	&=B_2'(\sqrt{\gamma_2}x).
	\end{align*}
	This inequality and \eqref{lem2:proof:eq1} together gives \eqref{lem3:eq2}. As for \eqref{lem3:eq1}, since $C_1,C_2$ are even, it suffices to prove \eqref{lem3:eq1} only for $x\geq 0.$ Define 
	$$
	\hat\rho(\lambda)=\frac{\e C_1(z_1)S_\lambda}{\e S_\lambda},\,\,\forall \lambda\in[0,1].
	$$
	Computing directly gives
	\begin{align*}
	\hat\rho'(\lambda)&=m\frac{\e C_1(z_1)(A_2(z_2)-A_1(z_1))S_\lambda\cdot \e S_\lambda- \e C_1(z_1)S_\lambda \cdot\e (A_2(z_2)-A_1(z_1))S_\lambda}{(\e S_\lambda)^2}
	\end{align*}
	Set $f_3(y)=C_1(\sqrt{\gamma_1}y)$. Note that $f_1,f_3$ are both even and have nonnegative derivatives on $[0,\infty).$ These allow us to apply \eqref{lem2:cond0}  and \eqref{lem2:eq1} to obtain $\hat\rho'\geq 0$ for $\lambda\in[0,1].$ As a result, \eqref{lem3:eq1} follows from
	\begin{align*}
	\e C_1(z_1)\exp m\bigl(A_1(z_1)-B_1(\sqrt{\gamma_1}x)\bigr)
	&=\hat\rho(0)\\
	&\leq \hat\rho(1)\\
	&=	\e C_1(z_1)\exp m\bigl(A_2(z_2)-B_2(\sqrt{\gamma_2}x)\bigr)\\
	&\leq \e C_2(z_2)\exp m\bigl(A_2(z_2)-B_2(\sqrt{\gamma_2}x)\bigr),
	\end{align*}
	where the last inequality used the assumptions that $C_1,C_2$ are even and $C_1(\sqrt{\gamma_1}y)\leq C_2(\sqrt{\gamma_2}y)$ for $y\geq 0.$
	This completes our proof.
	
\end{proof}

\begin{lemma}
	\label{lem8}
	Let $A$ be even convex and $C$ be even with $C'\geq 0$ on $[0,\infty).$ Then 
	\begin{align*}
	F(x):=\frac{\e C(x+\sqrt{\gamma}z)\exp mA(x+\sqrt{\gamma}z)}{\e \exp mA(x+\sqrt{\gamma}z)}
	\end{align*}
	is even and is nondecreasing for $x\geq 0.$
\end{lemma}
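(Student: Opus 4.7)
The plan is to first record that $F$ is even by a direct change of variables $z\mapsto -z$ in the Gaussian integral: since both $A$ and $C$ are even and $z$ is symmetric, $F(-x)$ equals $F(x)$. For the monotonicity part, I would differentiate $F$ and rewrite the derivative as a sum of two explicitly nonnegative pieces, each handled by one of the two halves of Lemma \ref{lem2}.

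Concretely, introduce the weighted expectation
\[
\langle h\rangle_x := \frac{\e\, h(x+\sqrt{\gamma}z)\exp mA(x+\sqrt{\gamma}z)}{\e \exp mA(x+\sqrt{\gamma}z)}.
\]
Differentiating under the integral sign, one gets
\[
F'(x) = \langle C'\rangle_x + m\Bigl(\langle C A'\rangle_x - \langle C\rangle_x\langle A'\rangle_x\Bigr),
\]
so it suffices to show both terms are nonnegative for $x\geq 0$.

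For the first term, note that $C'$ is odd (since $C$ is even) and nonnegative on $[0,\infty)$ by hypothesis, while $\exp mA$ is even. Hence the integrand $C'(y)\exp mA(y)$ is odd and nonnegative on $[0,\infty)$, and the integral representation used in the proof of Lemma \ref{lem2}$(i)$ gives
\[
\e C'(x+\sqrt{\gamma}z)\exp mA(x+\sqrt{\gamma}z) = 2\int_0^\infty C'(u)\exp mA(u)\,\rho(u,x)\sinh\!\Bigl(\tfrac{ux}{c^2}\Bigr)\,du \geq 0
\]
for $x\geq 0$, where $\rho$ is the Gaussian density and $c^2=\gamma\e z^2$. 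Dividing by the (positive) normalizing constant shows $\langle C'\rangle_x\geq 0$.

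For the covariance term I plan to apply Lemma \ref{lem2}$(ii)$ under condition \eqref{lem2:cond1}. Take $f_1=C$, which is even and nondecreasing on $[0,\infty)$ thanks to $C'\geq 0$ there; take $f_2=A'$, which is odd because $A$ is even, and nondecreasing on $[0,\infty)$ (in fact on all of $\mathbb{R}$) because $A$ is convex; and take
\[
D(y) := \frac{\exp mA(y)}{\e \exp mA(x+\sqrt{\gamma}z)},
\]
which is even (since $A$ is) and satisfies $\e D(x+\sqrt{\gamma}z)=1$. Lemma \ref{lem2}$(ii)$ then yields $\langle CA'\rangle_x\geq \langle C\rangle_x\langle A'\rangle_x$, so the second piece of $F'(x)$ is also nonnegative. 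Combining the two gives $F'(x)\geq 0$ for $x\geq 0$.

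The only real obstacle is setting up the right correspondence with the hypotheses of Lemma \ref{lem2}$(ii)$; once one notices that $A'$ inherits oddness from the evenness of $A$ and monotonicity on $[0,\infty)$ from its convexity, and that the exponential weight $\exp mA$ is automatically even, the two structural conditions \eqref{lem2:cond1} fall into place and the remainder is a routine differentiation of $F$. No further analytic input beyond the differentiation-under-the-integral (justified by the boundedness/growth already implicit in the paper's setting) is needed.
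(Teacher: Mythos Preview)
Your proof is correct and follows essentially the same route as the paper: differentiate $F$, write $F'(x)=\langle C'\rangle_x+m\bigl(\langle CA'\rangle_x-\langle C\rangle_x\langle A'\rangle_x\bigr)$, handle the first term via Lemma~\ref{lem2}$(i)$ (using that $C'\exp mA$ is odd and nonnegative on $[0,\infty)$), and the covariance term via Lemma~\ref{lem2}$(ii)$ under condition~\eqref{lem2:cond1} with $f_1=C$, $f_2=A'$, and $D$ the normalized weight $\exp mA$. The only cosmetic difference is that you spell out the $\sinh$ integral representation explicitly, whereas the paper simply invokes Lemma~\ref{lem2}$(i)$ directly.
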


\begin{proof}
	Easy to see that $F$ is even. Denote $z_x=x+\sqrt{\gamma}z$. We compute directly to get
	\begin{align*}
	F'(x)&=\frac{\e C'(z_x)e^{mA(z_x)}}{\e e^{mA(z_x)}}\\
	&+m\frac{\e C(z_x)A'(z_x)e^{mA(z_x)}\cdot \e e^{mA(z_x)}-\e C(z_x)e^{mA(z_x)}\cdot \e A'(z_x)e^{mA(z_x)}}{(\e e^{mA(z_x)})^2}
	\end{align*}
	Here since $C'e^{mA}$ is odd and $\geq 0$ on $[0,\infty)$, Lemma \ref{lem2} $(i)$ implies that the first term is nonnegative. As for the second term, since $C$ is even with $C'\geq 0$ on $[0,\infty)$, $A'$ is odd with $A''\geq 0$ on $[0,\infty)$ and $e^{mA}$ is even, the application of \eqref{lem2:cond1} and \eqref{lem2:eq1} implies that it is also nonnegative. So $F$ is nondecreasing for $x\geq 0.$
\end{proof}

\section{Proof of Main Results}

We will first prove that $\Psi_{\alpha,\gamma}(s)$ is concave in $\gamma$ for any $s\in[0,1]$ and then establish the Legendre structure of the Parisi formula. 
Before we start, note that from \cite[Proposition 2]{AChen}, $\Psi_{\alpha,\gamma}$ is a twice differentiable even convex function in the spacial variable, which will be used over and over again in our argument. Assume that $\alpha$ is of the form \eqref{eq1}. Recall $z_0,\ldots,z_{k+1}$ from \eqref{def1} and $\zeta_{a,b},\eta_{a,b}^x$ from \eqref{def2}. Let $0\leq \gamma_1\leq\gamma_2.$ We set $$\eta_{a,b}^{i,x}=\gamma_i^{1/2}(x+\zeta_{a,b})$$ for $0\leq a\leq b\leq k+1$ and $i=1,2.$ 

\begin{lemma}
	\label{prop3}
	For $0\leq b\leq k+1,$ $\partial_x\Psi_{\alpha,\gamma_1}(q_b,\sqrt{\gamma_1}x)\leq \partial_x\Psi_{\alpha,\gamma_2}(q_b,\sqrt{\gamma_2}x)$ for $x\geq 0.$
\end{lemma}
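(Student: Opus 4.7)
The plan is to prove the claim by backward induction on $b \in \{0, 1, \ldots, k+1\}$, starting from the terminal level $b = k+1$ where $q_{k+1} = 1$. The base case is immediate: $\Psi_{\alpha, \gamma}(1, y) = \log \cosh y$ is independent of $\gamma$, so $\partial_x \Psi_{\alpha, \gamma_i}(1, \sqrt{\gamma_i} x) = \tanh(\sqrt{\gamma_i} x)$, and monotonicity of $\tanh$ on $[0, \infty)$ together with $\gamma_1 \leq \gamma_2$ yields the desired inequality. Each inductive step moves from level $b$ down to $b-1$ by invoking the Cole-Hopf representation of the Parisi PDE \eqref{pde} on the subinterval $[q_{b-1}, q_b]$, where $\alpha$ takes the constant value $m_{b-1}$, and then applying the comparison Lemma \ref{lem3}.

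More concretely, for $m_{b-1} > 0$ the Parisi PDE on $[q_{b-1}, q_b]$ is linearized by the Cole-Hopf substitution and yields the closed-form representation
$$\Psi_{\alpha, \gamma}(q_{b-1}, x) = \frac{1}{m_{b-1}} \log \e \exp \bigl( m_{b-1} \Psi_{\alpha, \gamma}(q_b, x + \sqrt{\gamma}\, z_{b-1}) \bigr),$$
where $z_{b-1}$ is the centered Gaussian introduced in \eqref{def1} with variance $\xi'(q_b) - \xi'(q_{b-1})$. Setting $A_i(y) := \Psi_{\alpha, \gamma_i}(q_b, y)$ and $m := m_{b-1}$, the even convexity of each $A_i$ is provided by \cite[Proposition 2]{AChen}, and the hypothesis $A_1'(\sqrt{\gamma_1} x) \leq A_2'(\sqrt{\gamma_2} x)$ for $x \geq 0$ is exactly the inductive assumption at level $b$. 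The functions $B_i$ produced by Lemma \ref{lem3} coincide with $\Psi_{\alpha, \gamma_i}(q_{b-1}, \cdot)$, so conclusion \eqref{lem3:eq2} delivers the statement at level $b-1$.

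When $m_{b-1} = 0$ the Cole-Hopf formula degenerates into the expectation $\partial_x \Psi_{\alpha, \gamma_i}(q_{b-1}, x) = \e \partial_x \Psi_{\alpha, \gamma_i}(q_b, x + \sqrt{\gamma_i}\, z_{b-1})$, and I would conclude instead by applying Lemma \ref{lem2} $(i)$ to the functions $y \mapsto \partial_x \Psi_{\alpha, \gamma_i}(q_b, \sqrt{\gamma_i} y)$, which are odd (since $\Psi$ is even in the spatial variable) and satisfy the needed ordering on $[0, \infty)$ by the inductive hypothesis. I do not anticipate a serious obstacle, as the technical lemmas of Section 4 were engineered precisely to power this induction; the only delicate point is keeping track of the asymmetric Gaussian rescalings $\sqrt{\gamma_1}\, z_{b-1}$ versus $\sqrt{\gamma_2}\, z_{b-1}$, but this asymmetry is exactly what Lemma \ref{lem3} is built to accommodate.
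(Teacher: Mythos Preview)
Your proposal is correct and follows essentially the same backward induction as the paper's proof: base case at $b=k+1$ via monotonicity of $\tanh$, inductive step by setting $A_i=\Psi_{\alpha,\gamma_i}(q_b,\cdot)$ and invoking \eqref{lem3:eq2} to obtain the inequality for $B_i=\Psi_{\alpha,\gamma_i}(q_{b-1},\cdot)$. You are in fact slightly more careful than the paper, which does not isolate the case $m_{b-1}=0$ (recall $m_0=0$ in \eqref{eq1}); your treatment of that degenerate case via Lemma~\ref{lem2}\,$(i)$ is correct and fills a small gap the paper leaves implicit.
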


\begin{proof}
	We proceed by induction in $b.$ Since $\Psi_{\alpha,\gamma_1}(1,x)=\log\cosh(x)=\Psi_{\alpha,\gamma_2}(1,x)$ and $(\log\cosh x)'=\tanh x$ is increasing, Lemma \ref{prop3} follows with $b=k+1$.
	Assume that \eqref{prop3} is true for some $1\leq b\leq k+1.$ Letting $A_i(x):=\Psi_{\alpha,\gamma_i}(q_b,x)$, we have $$
	B_i(x):=\frac{1}{m_{b-1}}\log \e \exp m_{b-1}A_i(x+\sqrt{\gamma_i}z_{b-1})=\Psi_{\alpha,\gamma_i}(q_{b-1},x).
	$$
	It is known that $A_i$ is even and convex. From the induction hypothesis, we also get $B_1'(\sqrt{\gamma_1}x)\leq B_2'(\sqrt{\gamma_2}x).$ 
	Thus, \eqref{lem3:eq2} concludes our statement in the case that $b$ is replaced by $b-1$ and we are done.
\end{proof}

\begin{lemma}\label{lem9}
	For any $0\leq a\leq b\leq k+1,$ the following is nondecreasing in $x\geq 0,$
	\begin{align}
	\begin{split}\label{lem9:eq1}
	&\e \partial_x\Psi_{\alpha,\gamma_i}(q_b,x+\sqrt{\gamma_i}\zeta_{a,b})^2\\
	&\qquad\cdot\exp \sum_{\ell=a}^{b-1}m_\ell(\Psi_{\alpha,\gamma_i}\bigl(q_{\ell+1},x+\sqrt{\gamma}\zeta_{a,\ell+1})-\Psi_{\alpha,\gamma_i}(q_\ell,x+\sqrt{\gamma_i}\zeta_{a,\ell})\bigr).
	\end{split}
	\end{align}
\end{lemma}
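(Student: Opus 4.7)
The plan is to prove by induction on $b-a$ a statement slightly stronger than Lemma~\ref{lem9}: for every $0\leq a\leq b\leq k+1$, the quantity in \eqref{lem9:eq1}, viewed as a function of $x$, is \emph{even} and nondecreasing on $[0,\infty)$. Carrying the evenness through the induction is essential, since it is exactly what Lemma~\ref{lem8} demands of its inner function $C$ at each subsequent step.

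For the base case $a=b$, the random variable $\zeta_{b,b}$ vanishes and the exponential sum is empty, so the expression reduces to $\partial_x\Psi_{\alpha,\gamma_i}(q_b,x)^2$. Since $\Psi_{\alpha,\gamma_i}(q_b,\cdot)$ is even and convex, $\partial_x\Psi_{\alpha,\gamma_i}(q_b,\cdot)$ is odd with nonnegative derivative on $[0,\infty)$, so its square is even and nondecreasing on $[0,\infty)$. For the inductive step from $a$ to $a-1$, I would condition on $z_{a-1}$ and set $y:=x+\sqrt{\gamma_i}z_{a-1}$. Because $\zeta_{a-1,\ell}=z_{a-1}+\zeta_{a,\ell}$ for $\ell\geq a$, all arguments of $\Psi_{\alpha,\gamma_i}$ at indices $\ell\geq a$ depend on $(x,z_{a-1})$ only through $y$. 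Peeling off the $\ell=a-1$ term in the exponent and using the Parisi-PDE recursion at the break point,
$$\e\exp m_{a-1}\Psi_{\alpha,\gamma_i}(q_a,x+\sqrt{\gamma_i}z_{a-1})=\exp m_{a-1}\Psi_{\alpha,\gamma_i}(q_{a-1},x),$$
the expression \eqref{lem9:eq1} for the pair $(a-1,b)$ rewrites as
$$F(x)=\frac{\e\bigl[G_{a,b}(x+\sqrt{\gamma_i}z_{a-1})\exp m_{a-1}\Psi_{\alpha,\gamma_i}(q_a,x+\sqrt{\gamma_i}z_{a-1})\bigr]}{\e\exp m_{a-1}\Psi_{\alpha,\gamma_i}(q_a,x+\sqrt{\gamma_i}z_{a-1})},$$
where $G_{a,b}(y)$ denotes the quantity in \eqref{lem9:eq1} for the pair $(a,b)$ evaluated at $y$. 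By the induction hypothesis, $G_{a,b}$ is even with nonnegative derivative on $[0,\infty)$, and $\Psi_{\alpha,\gamma_i}(q_a,\cdot)$ is even and convex. Hence Lemma~\ref{lem8}, applied with $A=\Psi_{\alpha,\gamma_i}(q_a,\cdot)$, $C=G_{a,b}$, and $m=m_{a-1}$ (viewing $\sqrt{\gamma_i}z_{a-1}$ as a generic centered Gaussian), yields that $F$ is even and nondecreasing on $[0,\infty)$, closing the induction.

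The main conceptual point is to recognize that \eqref{lem9:eq1} is really an iterated Gibbs-type average obtained by unrolling the discrete Parisi recursion at the break points $q_a,\ldots,q_b$, so that adding a single expectation over $z_{a-1}$ is precisely the situation Lemma~\ref{lem8} was engineered to handle. The only subtlety, which is easy to overlook, is the need to propagate \emph{evenness} alongside the monotonicity: without it the next application of Lemma~\ref{lem8} would fail, since that lemma requires $C$ to be even with $C'\geq 0$ on $[0,\infty)$.
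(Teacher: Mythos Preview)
Your proof is correct and follows essentially the same route as the paper's: induction decreasing in $a$, with the base case $a=b$ handled by the even convexity of $\Psi_{\alpha,\gamma_i}(q_b,\cdot)$, and the inductive step carried out by peeling off $z_{a-1}$, invoking the Parisi recursion to turn the $\ell=a-1$ term of the exponent into the normalizing denominator, and then applying Lemma~\ref{lem8} with $A=\Psi_{\alpha,\gamma_i}(q_a,\cdot)$ and $C$ equal to the expression for the pair $(a,b)$. Your explicit strengthening of the induction hypothesis to include evenness is exactly what the paper uses implicitly when it asserts that ``$C$ is even with $C'\geq 0$ on $[0,\infty)$ from the induction hypothesis''; you have simply made that point explicit.
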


\begin{proof}
	We argue by induction in $a.$
	If $a=b,$ then the announced inequality reads
	\begin{align*}
	\partial_x\Psi_{\alpha,\gamma_i}(q_b,x)^2\leq\partial_x\Psi_{\alpha,\gamma_i}(q_b,y)^2
	\end{align*}
	for any $0\leq x\leq y$ since we have known that $\Psi_{\alpha,\gamma_i}(q_b,\cdot)$ is even and convex.  
	Assume that \eqref{lem9:eq1} holds for some $1\leq a\leq b$. Define $A=\Psi_{\alpha,\gamma_i}(q_a,\cdot)$ and denote \eqref{lem9:eq1} by $C(x)$.  Now we express
	\begin{align*}
	&\e \partial_x\Psi_{\alpha,\gamma_i}(q_b,x+\sqrt{\gamma}\zeta_{a-1,b})^2\\
	&\qquad\cdot\exp \sum_{\ell={a-1}}^{b-1}m_\ell(\Psi_{\alpha,\gamma_i}\bigl(q_{\ell+1},x+\sqrt{\gamma}\zeta_{a-1,\ell+1})-\Psi_{\alpha,\gamma_i}(q_\ell,x+\sqrt{\gamma}\zeta_{a-1,\ell})\bigr)\\
	&=\frac{\e C(x+\sqrt{\gamma_i}z_{a-1})\exp m_{a-1}A(x+\sqrt{\gamma_i}z_{a-1})}{\e \exp m_{a-1}A(x+\sqrt{\gamma_i}z_{a-1})},
	\end{align*}
	where we used
	\begin{align*}
	\Psi_{\alpha,\gamma_i}(q_{a-1},x)=\frac{1}{m_{a-1}}\log \e\exp m_{a-1}\Psi_{\alpha,\gamma_i}(q_a,x+\sqrt{\gamma_i}z_{a-1}).
	\end{align*}
	Since $A$ is even convex and $C$ is even with $C'\geq 0$ on $[0,\infty)$ from the induction hypothesis, Lemma \ref{lem8} shows that \eqref{lem9:eq1} is valid with $a$ replaced by $a-1$. This completes our proof.
	
\end{proof}

\begin{proof}[\bf Proof of Theorem \ref{thm2}] From Proposition \ref{lem1}, we only need to show that 
	\begin{align}\label{thm2:proof:eq1}
	\e u_{\alpha,\gamma_1}^0(q)^2\leq \e u_{\alpha,\gamma_2}^0(q)^2,\,\,\forall q\in[0,1]\,\,\mbox{and}\,\,0\leq \gamma_1\leq \gamma_2.
	\end{align} 
	By an approximation argument, it is sufficient to consider $\alpha$'s of the form \eqref{eq1} and establish \eqref{thm2:proof:eq1} for $q=q_0,\ldots,q_{k+1}.$
	To this end, we claim that for $0\leq a\leq b\leq k+1$,
	\begin{align}
	\begin{split}\label{lem7:eq1}
	&\e \partial_x\Psi_{\alpha,\gamma_1}(q_b,\eta_{a,b}^{1,x})^2\exp \sum_{\ell=a}^{b-1}m_\ell\bigl(\Psi_{\alpha,\gamma_1}(q_{\ell+1},\eta_{a,\ell+1}^{1,x})-\Psi_{\alpha,\gamma_1}(q_\ell,\eta_{a,\ell}^{1,x})\bigr)\\
	&\leq \e \partial_x\Psi_{\alpha,\gamma_2}(q_b,\eta_{a,b}^{2,x})^2\exp \sum_{\ell=a}^{b-1}m_\ell\bigl(\Psi_{\alpha,\gamma_2}(q_{\ell+1},\eta_{a,\ell+1}^{2,x})-\Psi_{\alpha,\gamma_2}(q_\ell,\eta_{a,\ell}^{2,x})\bigr),\,\,\forall x\in\mathbb{R}.
	\end{split}
	\end{align}
	If this holds, taking $x=0$ and $a=0$ and applying Corollary \ref{cor1} to this inequality gives \eqref{thm2:proof:eq1} for $q=q_b$ with $0\leq b\leq k+1$ and therefore ends our proof. To justify \eqref{lem7:eq1}, we again argue by induction on $a$.
	Note that from Lemma \ref{prop3},
	\begin{align*}
	\partial_x\Psi_{\alpha,\gamma_1}(q_b,\sqrt{\gamma_1}x)^2\leq \partial_x\Psi_{\alpha,\gamma_2}(q_b,\sqrt{\gamma_2}x)^2,\,\,\forall x\in\mathbb{R}.
	\end{align*}
	This gives the base case $a=b$ of \eqref{lem7:eq1}. Assume that \eqref{lem7:eq1} holds for some $1\leq a\leq b.$
	Set 
	\begin{align*}
	A_i(y)&=\Psi_{\alpha,\gamma_i}(q_{a},y),\\
	B_i(y)&=\frac{1}{m_{a-1}}\log \e\exp m_{a-1}A_i(y+\sqrt{\gamma_i}z_{a-1})=\Psi_{\alpha,\gamma_i}(q_{a-1},y)
	\end{align*}
	and
	\begin{align*}
	C_i(y)&=\e \partial_x\Psi_{\alpha,\gamma_i}(q_a,x+\sqrt{\gamma_i}\zeta_{a,b})^2\\
	&\qquad\cdot\exp \sum_{\ell=a}^{b-1}m_\ell\bigl(\Psi_{\alpha,\gamma_i}(q_{\ell+1},y+\sqrt{\gamma_i}\zeta_{a,\ell+1})-\Psi_{\alpha,\gamma_i}(q_\ell,y+\sqrt{\gamma_i}\zeta_{a,\ell})\bigr).
	\end{align*}
	Note that $A_1,A_2$ are even convex with $A_1'(\sqrt{\gamma_1}y)\leq A_2'(\sqrt{\gamma_2}y)$ for $y\geq 0$ by Lemma \ref{prop3} and that $C_1,C_2$ are even and nondecreasing for $y\geq 0$ by Lemma \ref{lem9}. 
	Applying \eqref{lem3:eq1}, we obtain
	\begin{align*}
	&\e C_1(\sqrt{\gamma_1}(x+z_{a-1}))\exp m_{a-1}\bigl(A_1(\sqrt{\gamma_1}(x+z_{a-1}))-B_1(\sqrt{\gamma_1}x)\bigr)\\
	&\leq \e C_2(\sqrt{\gamma_2}(x+z_{a-1}))\exp m_{a-1}\bigl(A_2(\sqrt{\gamma_2}(x+z_{a-1}))-B_2(\sqrt{\gamma_2}x)\bigr),
	\end{align*}
	which gives \eqref{lem7:eq1} in the case that $a$ is replaced by $a-1$ since
	\begin{align*}
	&\e C_i(\sqrt{\gamma_i}(x+z_{a-1}))\exp m_{a-1}\bigl(A_i(\sqrt{\gamma_i}(x+z_{a-1}))-B_i(\sqrt{\gamma_i}x)\bigr)\\
	&=\e \partial_x\Psi_{\alpha,\gamma_i}(q_b,\eta_{a-1,b}^{i,x})^2\exp \sum_{\ell=a-1}^{b-1}m_\ell\bigl(\Psi_{\alpha,\gamma_i}(q_{\ell+1},\eta_{a-1,\ell+1}^{i,x})-\Psi_{\alpha,\gamma_i}(q_\ell,\eta_{a-1,\ell}^{i,x})\bigr).
	\end{align*}
	So our claim follows.
\end{proof}

\begin{proof}[\bf Proof of Theorem \ref{thm1}]
	It is clear from the definitions of $\hg$ that \begin{align*}
	\inf_{\alpha\in\mathcal{M}}\Bigl(\hg(\alpha)+\frac{\gamma}{2}\int_0^1\alpha(s)\xi'(s)ds\Bigr)\geq \hp(\gamma).
	\end{align*}
	Assume that $\alpha_{P,\sqrt{\gamma}}$ is the minimizer of the problem $\hp(\gamma)$. Use of \eqref{rmk2:eq1} implies
	\begin{align*}
	&\partial_{\gamma}\Bigl(\hp(\alpha_{P,\sqrt{\gamma}},\gamma)-\frac{\gamma}{2}\int_0^1\alpha_{P,\sqrt{\gamma}}(s)\xi'(s)ds\Bigr)\\
	&=\frac{1}{2}\int_0^1\alpha_{P,\sqrt{\gamma}}(s)\xi'(s)ds-\frac{1}{2}\int_0^1\alpha_{P,\sqrt{\gamma}}(s)\xi'(s)ds\\
	&=0.
	\end{align*}
	Since  $\hp(\alpha_{P,\sqrt{\gamma}},\cdot)$ is concave from Theorem \ref{thm2}, $\gamma$ is a maximizer for the variational problem $\hg(\alpha_{P,\sqrt{\gamma}})$. As a result,
	\begin{align*}
	&\hg(\alpha_{P,\sqrt{\gamma}})+\frac{\gamma}{2}\int_0^1\alpha_{P,\sqrt{\gamma}}(s)\xi'(s)ds\\
	&=\hp(\alpha_{P,\sqrt{\gamma}},\gamma)-\frac{\gamma}{2}\int_0^1\alpha_{P,\sqrt{\gamma}}(s)\xi'(s)ds+
	\frac{\gamma}{2}\int_0^1\alpha_{P,\sqrt{\gamma}}(s)\xi'(s)ds\\
	&= \hp(\gamma)
	\end{align*}
	and thus,
	\begin{align*}
	\inf_{\alpha\in\mathcal{M}}\Bigl(\hg(\alpha)+\frac{\gamma}{2}\int_0^1\alpha(s)\xi'(s)ds\Bigr)\leq \hp(\gamma).
	\end{align*}
	This gives \eqref{thm1:eq1} and the infimum for \eqref{thm1:eq1} is attained by the Parisi measure $\alpha_{P,\sqrt{\gamma}},$ while the uniqueness can be concluded from the fact, derived from the strict convexity of $\Phi_{\alpha,\sqrt{\gamma}}(0,0)$ in $\alpha$ \cite{AChen14}, that $\hg$ is strictly convex along any linear path joining two distinct $\alpha$ and $\alpha'$ with finite $\hg(\alpha)$ and $\hg(\alpha')$. As for \eqref{thm1:eq2}, clearly the left-hand side is no less than the right-hand side. The other direction could be obtained by a similar argument as above and using the crucial assumption that $\alpha$ is now a Parisi measure. We omit this part of the argument.
\end{proof} 

\begin{remark}\label{rmk1}\rm
	Another Legendre duality $\hp(\gamma)$ one could also have is to consider the Legendre transform of $\hp(\gamma)$ instead of $\hp(\alpha,\gamma),$
	\begin{align*}
	\hat{L}(\alpha)=\sup_{\gamma\geq 0}\Bigl(\hp(\gamma)-\frac{\gamma}{2}\int_0^1 \alpha(s)\xi'(s)ds\Bigr).
	\end{align*} 
	Note that it is known from \cite[Equation $(43)$]{AChen}, 
	$$
	\int_0^1\alpha_{P,\sqrt{\gamma}}(s)\xi'(s)ds\leq \sqrt{\frac{2\xi(1)\log 2}{\gamma}},
	$$
	which implies from \eqref{rmk2:eq1},
	\begin{align}
	\begin{split}
	\label{rmk1:eq3}
	\Bigl(\hp(\gamma)-\frac{\gamma}{2}\int_0^1 \alpha(s)\xi'(s)ds\Bigr)'&=\frac{1}{2}\Bigl(\int_0^1\alpha_{P,\sqrt{\gamma}}(s)\xi'(s)ds-\int_0^1\alpha(s)\xi'(s)ds\Bigr)
	\end{split}\\
	\begin{split}
	\label{rmk1:eq4}
	&\leq \frac{1}{2}\Bigl(\sqrt{\frac{2\xi(1)\log 2}{\gamma}}-\int_0^1\alpha(s)\xi'(s)ds\Bigr).
	\end{split}
	\end{align}
	From \eqref{rmk1:eq3}, if $\alpha(s)=0$ on $[0,1)$, then this derivative is positive for all $\gamma>0$ and thus $\hat{L}(\alpha)=\infty;$ otherwise the derivative is eventually negative when $\gamma$ is large enough from \eqref{rmk1:eq4}, in which case $\hat{L}(\alpha)<\infty.$ Now following exactly the same argument as Theorem \ref{thm1} concludes
	\begin{align}\label{rmk1:eq1}
	\hp(\gamma)&=\inf_{\alpha\in\mathcal{M}}\Bigl(\hat{L}(\alpha)+\frac{\gamma}{2}\int_0^1 \alpha(s)\xi'(s)ds\Bigr).
	\end{align}
	It is clear that $\hat{L}$ is convex by definition, but it is not strict and the minimizer in \eqref{rmk1:eq1} is not unique. Indeed, for any $\gamma>0,$ assume that $\alpha_0\in\mathcal{M}$ satisfies
	\begin{align}
	\label{rmk1:eq2}
	\int_0^1\alpha_0(s)\xi'(s)ds=\int_0^1\alpha_{P,\sqrt{\gamma}}(s)\xi'(s)ds.
	\end{align}
	From this and \eqref{rmk2:eq1}, we have
	\begin{align*}
	\Bigl(\hp(\gamma)-\frac{\gamma}{2}\int_0^1 \alpha_0(s)\xi'(s)ds\Bigr)'&=\frac{1}{2}\Bigl(\int_0^1\alpha_{P,\sqrt{\gamma}}(s)\xi'(s)ds-\int_0^1\alpha_0(s)\xi'(s)ds\Bigr)=0.
	\end{align*}
	Thus, by the concavity of $\hp,$
	\begin{align*}
	\hat{L}(\alpha_0)&=\hp(\gamma)-\frac{\gamma}{2}\int_0^1\alpha_0(s)\xi'(s)ds,
	\end{align*}
	which allows us to conclude two facts. First, $\hat{L}$ is not strict convex since any convex combination of two probability distributions satisfying \eqref{rmk1:eq2} also fulfills \eqref{rmk1:eq2}. 
	Second, $\alpha_0$ is a minimizer of \eqref{rmk1:eq1} since
	\begin{align*}
	\hp(\gamma)&\leq \hat{L}(\alpha_0)+\frac{\gamma}{2}\int_0^1\alpha_0(s)\xi'(s)ds=\hp(\gamma).
	\end{align*}
	So as long as $\alpha_{P,\sqrt{\gamma}}$ is not induced by a Dirac measure at $\{0\}$, or equivalently, $\alpha_{P,\sqrt{\gamma}}$ is not identically equal to $1$ on $[0,1]$, one can easily construct infinitely many minimizers through \eqref{rmk1:eq2}. 
\end{remark}

\begin{proof}[\bf Proof of Proposition \ref{prop1}]
	Let $\gamma_1=\beta_1^2$ and $\gamma_2=\beta_2^2.$
	From the concavity of $\hp(\gamma)$ and the formula \eqref{rmk2:eq1}, one sees that
	\begin{align*}
	\frac{1}{2}\int_0^1\alpha_{P,\sqrt{\gamma_1}}(s)\xi'(s)ds&=\hp'(\gamma_1)\geq\hp'(\gamma_2)=\frac{1}{2}\int_0^1\alpha_{P,\sqrt{\gamma_2}}(s)\xi'(s)ds,
	\end{align*} 	
	which from integration by parts implies that $$
	\int_0^1\xi(s)d\alpha_{P,\sqrt{\gamma_1}}\leq \int_0^1\xi(s)d\alpha_{P,\sqrt{\gamma_2}}.
	$$
	Replacing $\sqrt{\gamma_1}$ and $\sqrt{\gamma_2}$ respectively by $\beta_1$ and $\beta_2$ and using the fact that $$\lim_{N\rightarrow\infty}\e\left<\xi(R_{1,2})\right>_{\beta_i}=\int_0^1\xi(s)d\alpha_{P,\beta_i}$$ from \cite{PD08} complete our proof.
\end{proof}

\begin{proof}[\bf Proof of Proposition \ref{prop0}]
	Note that $\mathcal{P}(\beta)$ is continuous for all $\beta>0$ and from Theorem \ref{thm1}, it can be written as
	\begin{align*}
	\mathcal{P}(\beta)&=\hg(\alpha_{P,\beta})+\frac{\beta^2}{2}\int_0^1\alpha_{P,\beta}(s)\xi'(s)ds.
	\end{align*}
	To prove the continuity of $\alpha_{P,\beta}$ in $\beta>0,$ let $(\beta_n)$ be any positive sequence with limit $\beta.$ From compactness of $\mathcal{M}$, it suffices to assume that $\alpha_{P,\beta_n}$ converges to some $\alpha_0\in\mathcal{M}.$ Thus,
	\begin{align*}
	\mathcal{P}(\beta)
	&=\lim_{n\rightarrow\infty}\mathcal{P}(\beta_n)\\
	&=\lim_{n\rightarrow\infty}\bigl(\hg(\alpha_{P,\beta_n})+\frac{\beta_n^2}{2}\int_0^1\alpha_{P,\beta_n}(s)\xi'(s)ds\bigr)\\
	&=\hg(\alpha_{0})+\frac{\beta^2}{2}\int_0^1\alpha_{0}(s)\xi'(s)ds,
	\end{align*}	
	from which we conclude $\alpha_0=\alpha_{P,\beta}$ by the uniquensss of the minimizer in the problem \eqref{thm1:eq1} and this gives the continuity of $\beta\mapsto \alpha_{P,\beta}$. Finally, 
	using \eqref{diff} completes our proof.
\end{proof}


\end{document}